\documentclass{article}

\usepackage{amsmath, amssymb, amsthm}
\usepackage{enumitem}
\usepackage{comment}
\usepackage[margin=1.15in]{geometry}
\usepackage[numbers,sort&compress]{natbib}

\newtheorem{theorem}{Theorem}[section]
\newtheorem{lemma}[theorem]{Lemma}
\newtheorem{proposition}[theorem]{Proposition}
\newtheorem{corollary}[theorem]{Corollary}
\newtheorem*{claim}{Claim}
\theoremstyle{definition}
\newtheorem{definition}[theorem]{Definition}
\newtheorem{example}[theorem]{Example}
\theoremstyle{remark}
\newtheorem{remark}[theorem]{Remark}

\theoremstyle{definition}
\newtheorem{innercustomthm}{Theorem}
\newenvironment{customthm}[1]
  {\renewcommand\theinnercustomthm{\textbf{#1}}\innercustomthm}
  {\endinnercustomthm}

\newcommand{\Z}{\mathbb{Z}}
\newcommand{\Q}{\mathbb{Q}}
\newcommand{\BS}{\mathrm{BS}}
\newcommand{\TF}{\text{-}\mathrm{TF}}
\newcommand{\TCF}{\text{-}\mathrm{TCF}}
\newcommand{\ACFA}{\mathrm{ACFA}}
\newcommand{\Stab}{\operatorname{Stab}}
\newcommand{\Frac}{\operatorname{Frac}}
\title{Baumslag--Solitar Subgroups Obstruct Model Companions for Fields with Group Actions}
\author{Makoto Yanagawa}
\date{}
\begin{document}
\maketitle

\begin{abstract}
  We prove that if a group \(G\) contains a Baumslag--Solitar group \(\mathrm{BS}(m,n)\), with \(mn\neq0\), then the theory of fields with a \(G\)-action has no model companion.
  In particular, every group containing \(\mathbb{Z}^2\cong\mathrm{BS}(1,1)\) has no model companion for its field actions, resolving a conjecture of Beyarslan and Kowalski.
  Consequently, there are no model companions for field actions of \(\mathbb{Q}^2\), Thompson's groups \(F,T,V\), or Higman's group.
  Our argument adapts Hrushovski's method for two commuting automorphisms.
  The key modification is the cyclotomic condition \(\theta_q\), which removes the need to prescribe a non-trivial action on a root of unity and thereby makes Hrushovski's method applicable after passing from a subgroup to an ambient group.
\end{abstract}

\section{Introduction}

A single automorphism of a field is one of the classical success stories of model theory.
Macintyre proved that the theory of fields with one distinguished automorphism has a model companion \cite{Macintyre1997}; the model theory of this companion was developed by Chatzidakis and Hrushovski \cite{ChatzidakisHrushovski1999}, and the resulting theory is now denoted by \(\ACFA\).
The picture changes abruptly when one adds a second commuting automorphism.
Hrushovski observed that fields with two commuting automorphisms have no model companion, and a proof was recorded by Kikyo \cite{Kikyo2004}.
Thus already
\[
  \Z\TCF \text{ exists},\quad \Z^2\TCF \text{ does not exist}.
\]

This contrast led to the systematic study of fields with an action of a fixed group.
Sj\"ogren treated actions of finite groups and free groups in his 2005 report \cite{Sjogren2005}; Medvedev established the existence of the model companion for \(\Q\)-actions \cite{Medvedev2015}.
More recently, Beyarslan and Kowalski classified the torsion abelian case \cite{BKTorsion2023} and proved that model companions are rare among finitely generated acting groups \cite{BKRare2024}.
The latter result also led to the withdrawal of an earlier claim concerning virtually free groups; see the corrigendum \cite{BKCorrigendum2024}.
Their Frattini-kernel criterion gives non-existence for many individual finitely generated acting groups, including \(\Z^2\) itself \cite[Theorem~3.7 and Corollary~4.3]{BKRare2024}.
It does not, however, provide a general mechanism by which an obstruction for a subgroup persists in an arbitrary ambient group.

Several results suggested that such obstructions might pass to ambient groups.
Beyarslan and Kowalski proved non-existence for \(\Z^2*H\), with \(H\) finitely generated \cite[Corollary~4.5]{BKRare2024}, and for \(G\times C_2\) whenever \(\Z^2\leq G\) \cite[Corollary~6.9]{BKTorsion2023}, but left open the conjecture
\[
  \Z^2\leq G \quad\Longrightarrow\quad G\TCF \text{ does not exist}.
\]
This is \cite[Conjecture~6.6]{BKTorsion2023}.
More generally, they asked whether \(H\leq G\) and the existence of \(G\TCF\) imply the existence of \(H\TCF\) \cite[Question~4.8]{BKRare2024}.

There is a concrete reason why the statement for groups containing \(\Z^2\) remained a conjecture in \cite{BKTorsion2023}.
The available obstruction was Hrushovski's argument for two commuting automorphisms.
In the strengthened form recalled in \cite[Theorem~6.7]{BKTorsion2023}, it excludes an \(\aleph_0\)-saturated p.e.c. \(\Z^2\)-field (Definition~\ref{def:pec}) only under a prescribed cyclotomic condition: a primitive cube root of unity \(\zeta\) must be present and both generators must act on it non-trivially by
\[
  \zeta\longmapsto\zeta^2.
\]
Although p.e.c. descends from \(G\) to a subgroup, the induced \(\Z^2\)-action on an arbitrary \(G\)-field need not satisfy this additional condition.
This is precisely the obstruction identified by Beyarslan and Kowalski in their discussion following \cite[Theorem~6.7]{BKTorsion2023}; in particular, they note that for \(G=\Q^2\) it can never be satisfied.

We modify Hrushovski's method by imposing the cyclotomic condition \(\theta_q\) of Definition~\ref{def:cyclotomic-condition}.
This condition replaces the prescribed non-trivial action on a root of unity by a compatibility condition for the Baumslag--Solitar relation and permits the root of unity to be fixed.
After replacing the stable letter \(\tau\) by \(\tau^{q-1}\), Fermat's little theorem makes the relevant cyclotomic exponent equal to \(1\).
A root of unity fixed by the ambient group can therefore be used, so the construction applies after passing from a subgroup to an ambient group.
The corresponding realization problem is given in Definition~\ref{def:realizationProblem}.

Introduced by Baumslag and Solitar in connection with non-Hopfian one-relator groups \cite{BaumslagSolitar1962}, the Baumslag--Solitar groups are
\[
  \BS(m,n) := \langle \sigma,\tau\mid\tau\sigma^m\tau^{-1}=\sigma^n\rangle,\quad mn\neq0.
\]
The family contains \(\BS(1,1)\cong\Z^2\), the Klein bottle group \(\BS(1,-1)\), the solvable groups \(\BS(1,n)\cong\Z[1/n]\rtimes_n\Z\), and non-solvable examples such as \(\BS(2,3)\).

The main theorem passes the obstruction from a Baumslag--Solitar subgroup to the ambient group.

\begin{customthm}{\ref{th:overgroupBSmn}}
  If \(\BS(m,n)\leq G\) for some non-zero integers \(m,n\), then \(G\TCF\) does not exist.
\end{customthm}

Taking \(m=n=1\) settles \cite[Conjecture~6.6]{BKTorsion2023}: every group containing \(\Z^2\) has a theory of field actions with no model companion.
In particular, \(\Q^2\TCF\) does not exist, resolving the example for which the earlier cyclotomic obstruction can never be satisfied.
Further applications include Thompson's groups \(F,T,V\) and Higman's group; see Example~\ref{ex:furtherExamples}.
The theorem removes both the finite-generation hypothesis on the second factor in the free-product result and the auxiliary factor \(C_2\) in the direct-product result.
More generally, it answers the contrapositive form of \cite[Question~4.8]{BKRare2024} for Baumslag--Solitar subgroups, without any residual finiteness, finite generation, or presentation hypothesis on the ambient group.

These examples also clarify the difference from the Frattini-kernel criterion of \cite[Theorem~3.7]{BKRare2024}.
For Thompson's group \(F\), every finite quotient factors through the abelianization \(F_{\mathrm{ab}}\cong\Z^2\), since every proper quotient of \(F\) is abelian \cite[Theorems~4.1 and 4.3]{CannonFloydParry1996}; hence \(\widehat F\cong\widehat{\Z^2}\), and that criterion already applies.
By contrast, Thompson's groups \(T,V\) are infinite simple groups, while Higman's group has no non-trivial finite quotient \cite{CannonFloydParry1996,RivasTriestino2019}.
Their profinite completions, and therefore the kernels of their universal Frattini covers, are trivial, so the Frattini-kernel criterion gives no obstruction for these groups, whereas Theorem~\ref{th:overgroupBSmn} applies through their Baumslag--Solitar subgroups.

The proof has two steps.
A finite periodic \(\sigma\)-orbit produces the incompatibility recorded in Proposition~\ref{prop:realizationObstruction}, while Proposition~\ref{prop:SigmaRealization} realizes every finite fragment in a regular extension.
Compactness, \(\aleph_0\)-saturation, and p.e.c. then give the contradiction.

The paper is organized as follows.
Section~\ref{sec:preliminaries} records the preliminaries, including descent to subgroups and the quotient-field correspondence.
Sections~\ref{sec:BS-cocycles}--\ref{sec:realization-construction} develop the cocycles and realization construction.
Section~\ref{sec:BS-obstruction} proves the Baumslag--Solitar obstruction for suitable homomorphic images, and Section~\ref{sec:overgroups} applies it to groups containing a Baumslag--Solitar group.

\section{Preliminaries}\label{sec:preliminaries}

\subsection{Fields with group actions and model-theoretic terminology}

Let \(G\) be a group.
Write
\[
  L_G:=L_{\mathrm{ring}}\cup\{\sigma_g:g\in G\},
\]
where each \(\sigma_g\) is a unary function symbol.
A \emph{\(G\)-field} is an \(L_G\)-structure whose reduct to \(L_{\mathrm{ring}}\) is a field and in which
\[
  \sigma_1=\operatorname{id},\quad \sigma_{gh}=\sigma_g\circ\sigma_h,
\]
and every \(\sigma_g\) is a field automorphism.
A \(G\)-field extension \(L/K\) is a field extension for which the \(G\)-action on \(L\) restricts to the given action on \(K\).
We denote by \(G\TF\) the first-order theory of \(G\)-fields.

We recall the model-theoretic terminology used below.
A theory \(T^*\) is a \emph{model companion} of a theory \(T\) if \(T^*\) is model complete and \(T\) and \(T^*\) are companions, equivalently, they have the same universal consequences and every model of either theory embeds into a model of the other.
When the model companion of \(G\TF\) exists, we denote it by \(G\TCF\).
A model \(M\models T\) is \emph{existentially closed} (e.c.) in the class of models of \(T\) if, whenever \(M\subseteq N\models T\), every existential formula with parameters from \(M\) which holds in \(N\) already holds in \(M\).
In particular, when \(G\TCF\) exists, its models are existentially closed \(G\)-fields.
See, for example, \cite[\S\S3.1 and 3.4]{Marker2002} for model completeness and model companions.

For an infinite cardinal \(\kappa\), a structure \(M\) is \emph{\(\kappa\)-saturated} if every complete type in finitely many variables over a parameter set \(A\subseteq M\) with \(|A|<\kappa\) is realized in \(M\).
We shall only need \(\aleph_0\)-saturation.
We also use the Compactness Theorem in its standard form: a set of first-order sentences is satisfiable provided every finite subset is satisfiable.
Thus a partial type over a finite parameter set which is finitely satisfiable is consistent, and an \(\aleph_0\)-saturated structure realizes it after extension to a complete type.
For compactness, types, and saturation, see \cite[\S2.1, \S4.1, \S4.3]{Marker2002}.

If a group \(G\) acts on an abelian group \((A,\star)\), a map \(f:G\to A\) is a \emph{\(1\)-cocycle} if
\[
  f(gh)=f(g)\star g\bigl(f(h)\bigr).
\]
For the trivial action, this is precisely a homomorphism.

\subsection{Regular extensions and pseudo-existential closedness}

A field extension \(L/K\) is \emph{regular} if it is separable and \(K\) is relatively algebraically closed in \(L\); equivalently, \(L\otimes_K\overline K\) is an integral domain for an algebraic closure \(\overline K\) of \(K\).
We use freely the standard equivalences between regularity, linear disjointness from an algebraic closure, and geometric integrality; see \cite[Chapter~3]{FriedJarden2023}.
The \emph{characteristic exponent} of a field is \(1\) in characteristic zero and \(p\) in characteristic \(p>0\).

\begin{definition}\label{def:pec}
  A \(G\)-field \(K\) is \emph{pseudo-existentially closed} (p.e.c.) if \(K\) is existentially closed in every regular \(G\)-field extension of \(K\).
\end{definition}

Every existentially closed \(G\)-field is p.e.c.
The point of p.e.c. in the present argument is that regular extensions can be induced from subgroups.
\subsection{Descent to subgroups}

The following standard induction construction extends a regular \(H\)-field extension to a regular \(G\)-field extension.
Essentially the same tensor-product construction is used by Beyarslan and Kowalski in the proof of \cite[Proposition~2.2]{BKTorsion2023} to show that p.e.c. descends from a group to its subgroups.
We include the argument for completeness, using a section \(G/H\to G\) and the map \(\alpha\) defined below.

\begin{lemma}\label{lem:inducedRegularExtension}
  Let \(G\) be a group, let \(K\) be a \(G\)-field, let \(H\leq G\), and let \(L/K\) be a regular \(H\)-field extension.
  Then there exist a regular \(G\)-field extension \(M/K\) and a \(K\)-embedding of \(H\)-fields
  \[
    L\hookrightarrow M.
  \]
\end{lemma}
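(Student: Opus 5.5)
We use the induced tensor-product construction. Fix a section \(s\colon G/H\to G\), \(gH\mapsto s(gH)\), with \(s(H)=1\). For each left coset \(c\in G/H\) let \(L_c\) be a copy of \(L\), regarded as a \(K\)-algebra via the twisted structure map \(K\to L_c\), \(a\mapsto s(c)^{-1}(a)\); here \(s(c)^{-1}\) acts on \(K\) through the given \(G\)-action. Thus \(L_c\) is the extension \(L/K\) transported along \(s(c)\), and it is still regular over \(K\), because regularity is preserved under twisting the base by an automorphism of \(K\). Set
\[
  A:=\bigotimes_{c\in G/H} L_c,
\]
the (possibly infinite) tensor product over \(K\), defined as the direct limit of the finite tensor products. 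Since \(L/K\) is regular, every finite tensor product of regular extensions is an integral domain whose fraction field is regular over \(K\). A direct limit of such domains is again a domain, and regularity passes to the union. Hence \(M:=\Frac(A)\) is a regular extension of \(K\). This is the step where regularity is essential; for finitely many factors it is the standard fact in \cite[Chapter~3]{FriedJarden2023}.

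Next we define the \(G\)-action. For \(g\in G\) and \(c\in G/H\), put
\[
  \alpha(g,c):=s(gc)^{-1}\,g\,s(c)\in H.
\]
Define \(\sigma_g\) on pure tensors by sending the factor \(x\in L_c\) to \(\alpha(g,c)(x)\in L_{gc}\). We then check three things:
\begin{enumerate}[label=(\roman*)]
  \item this map is \(\sigma_g\)-semilinear over \(K\), that is, compatible with the twisted \(K\)-structures;
  \item it is a ring automorphism of \(A\), since it permutes the factors bijectively and acts by automorphisms on each;
  \item the cocycle identity \(\alpha(gh,c)=\alpha(g,hc)\,\alpha(h,c)\) holds, which gives \(\sigma_{gh}=\sigma_g\sigma_h\) and \(\sigma_1=\operatorname{id}\).
\end{enumerate}
For (i), take \(a\in K\) acting on \(L_c\) as \(s(c)^{-1}a\). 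It is sent to \(\alpha(g,c)s(c)^{-1}a=s(gc)^{-1}(ga)\), which is exactly \(ga\) in the \(K\)-structure of \(L_{gc}\). The action extends uniquely to \(M=\Frac(A)\), and it restricts to the given action on \(K\) by (i).

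Finally, the embedding \(L=L_H\hookrightarrow A\subseteq M\) is \(K\)-linear, since \(s(H)=1\) makes the twist trivial. It is \(H\)-equivariant because for \(h\in H\) we have \(hH=H\) and \(\alpha(h,H)=h\). The main obstacle is bookkeeping rather than ideas: verifying semilinearity and the cocycle identity, and justifying regularity of the infinite tensor product. For the latter we reduce to finite subproducts, each regular by induction on the number of factors, using that \(E\otimes_K L'\) is a domain with regular fraction field over \(E\) whenever \(L'/K\) is regular and \(E/K\) is regular. Regularity over \(K\) then follows by transitivity.
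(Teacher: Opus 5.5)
Your proposal is correct and follows essentially the same route as the paper: the same induced tensor product \(\bigotimes_{c\in G/H}L_c\) with twisted \(K\)-structures, the same cocycle \(\alpha(g,c)=s(gc)^{-1}gs(c)\), and the same \(H\)-equivariant embedding of \(L_H\) via \(s(H)=1\) and \(\alpha(h,H)=h\). The only difference is in the regularity step. You combine ``\(\Frac(E\otimes_K L')\) is regular over \(E\)'' with transitivity and directed unions. The paper instead shows directly that \(R\otimes_K\overline K\) is a domain, by running the same induction after tensoring with an arbitrary \(\Omega/K\), and then localizes to get \(M\otimes_K\overline K\).
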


\begin{proof}
  Choose a section
  \[
    \rho:G/H\longrightarrow G
  \]
  of the natural projection such that \(\rho(H)=1\), and define
  \[
    \alpha:G\times G/H\longrightarrow H, \quad \alpha(\gamma,\lambda) := \rho(\gamma\lambda)^{-1}\gamma\rho(\lambda).
  \]
  Then
  \[
    \gamma\rho(\lambda) = \rho(\gamma\lambda)\alpha(\gamma,\lambda),
  \]
  and
  \[
    \alpha(\gamma\delta,\lambda) = \alpha(\gamma,\delta\lambda)\alpha(\delta,\lambda)
  \]
  for all \(\gamma,\delta\in G\) and \(\lambda\in G/H\).

  For each \(\lambda\in G/H\), let \(L_\lambda\) be a copy of \(L\), with elements denoted by \([z]_\lambda\) for \(z\in L\), and give \(L_\lambda\) the \(K\)-algebra structure
  \[
    k\longmapsto [\rho(\lambda)^{-1}(k)]_\lambda.
  \]
  Put
  \[
    R:=\bigotimes_{\lambda\in G/H}L_\lambda,
  \]
  where the tensor product is taken over \(K\) and is understood as the directed union of the tensor products over finite subsets of \(G/H\).

  For \(\gamma\in G\), define
  \[
    \gamma:L_\lambda\longrightarrow L_{\gamma\lambda}, \quad \gamma([z]_\lambda) := [\alpha(\gamma,\lambda)(z)]_{\gamma\lambda}.
  \]
  Since
  \[
    \gamma\rho(\lambda) = \rho(\gamma\lambda)\alpha(\gamma,\lambda),
  \]
  these maps are compatible with the action of \(\gamma\) on \(K\), and hence induce a ring automorphism of \(R\).
  The identity above shows that these automorphisms define a \(G\)-action on \(R\).

  Each \(L_\lambda/K\) is isomorphic, as a field extension of \(K\), to a twist of the regular extension \(L/K\), and is therefore regular.
  We claim that every finite tensor product of the \(L_\lambda\) over \(K\) is an integral domain.
  Indeed, suppose that \(T=L_{\lambda_1}\otimes_K\cdots\otimes_K L_{\lambda_{r-1}}\) is a domain.
  The injection \(T\hookrightarrow\Frac(T)\) remains injective after tensoring over \(K\) with the field \(L_{\lambda_r}\).
  Since \(L_{\lambda_r}/K\) is regular,
  \[
    L_{\lambda_r}\otimes_K\Frac(T)
  \]
  is a domain by the tensor-product criterion for regularity.
  Thus \(T\otimes_K L_{\lambda_r}\) is a domain, proving the claim by induction.
  Hence \(R\) is an integral domain.
  Put
  \[
    M:=\Frac(R).
  \]
  The \(G\)-action on \(R\) extends uniquely to \(M\).

  More generally, the same induction works after tensoring with any field extension \(\Omega/K\): at the last step one embeds into
  \[
    L_{\lambda_r}\otimes_K\Frac\bigl(T\otimes_K\Omega\bigr),
  \]
  which is a domain because \(L_{\lambda_r}/K\) is regular.
  Thus every finite tensor product above is geometrically integral over \(K\).
  Taking \(\Omega=\overline K\) shows that \(R\otimes_K\overline K\) is an integral domain.
  Since \(M=\Frac(R)\), one has
  \[
    M\otimes_K\overline K\cong (R\otimes_K\overline K)_{R\setminus\{0\}}.
  \]
  The image of every element of \(R\setminus\{0\}\) is nonzero, so this localization is an integral domain.
  Hence \(M/K\) is regular.
  This is precisely where regularity, rather than mere integrality of the factors, is used; compare the standard tensor criterion recalled above and \cite[Chapter~3]{FriedJarden2023}.

  Finally, since \(\rho(H)=1\), the natural embedding
  \[
    L=L_H\longrightarrow R\longrightarrow M
  \]
  is a \(K\)-embedding.
  Moreover, for \(h\in H\),
  \[
    \alpha(h,H)=h,
  \]
  and therefore
  \[
    h([z]_H)=[h(z)]_H.
  \]
  Thus \(L\hookrightarrow M\) is an embedding of \(H\)-fields.
\end{proof}

\begin{corollary}\label{cor:Descent}
  Let \(K\) be a pseudo-existentially closed \(G\)-field and let \(H\leq G\).
  Then \(K\), with the action restricted to \(H\), is a pseudo-existentially closed \(H\)-field.
  In particular, the restriction to \(H\) of an existentially closed \(G\)-field is a pseudo-existentially closed \(H\)-field.
\end{corollary}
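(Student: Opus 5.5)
The plan is to deduce the corollary directly from Lemma~\ref{lem:inducedRegularExtension}, by pushing an existential formula from an arbitrary regular \(H\)-field extension up to a regular \(G\)-field extension. Let \(K\) be a p.e.c. \(G\)-field, and regard it as an \(H\)-field by restricting the action. Let \(L/K\) be a regular \(H\)-field extension, and let \(\varphi(\bar x)\) be an existential \(L_H\)-formula with parameters from \(K\) such that \(L\models\exists\bar x\,\varphi(\bar x)\). We must show that \(K\models\exists\bar x\,\varphi(\bar x)\) as an \(H\)-field.

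First, apply Lemma~\ref{lem:inducedRegularExtension} to obtain a regular \(G\)-field extension \(M/K\) together with a \(K\)-embedding of \(H\)-fields \(\iota\colon L\hookrightarrow M\). Existential formulas are preserved upward along embeddings. Hence the \(H\)-reduct of \(M\) satisfies \(\exists\bar x\,\varphi(\bar x)\), with the same parameters, because \(\iota\) fixes \(K\).

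Next, note that \(\varphi\) uses only the symbols \(\sigma_h\) with \(h\in H\). So \(\varphi\) is also an existential \(L_G\)-formula, and its truth in \(M\) as a \(G\)-field coincides with its truth in the \(H\)-reduct. Since \(K\) is p.e.c. as a \(G\)-field and \(M/K\) is a regular \(G\)-field extension, \(K\) is existentially closed in \(M\). Therefore \(K\models\exists\bar x\,\varphi(\bar x)\), and again this holds equally for \(K\) viewed as an \(H\)-field. This proves that the restriction of \(K\) to \(H\) is p.e.c.

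For the final sentence, it suffices to recall that every existentially closed \(G\)-field is p.e.c., since regular \(G\)-field extensions are in particular \(G\)-field extensions; the first part then applies.

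There is essentially no obstacle here. The only point to check is that the embedding from the lemma is a \(K\)-embedding and respects the \(H\)-action. This is exactly what the lemma guarantees, and it is what allows parameters from \(K\) and the function symbols \(\sigma_h\) to be transported faithfully from \(L\) to \(M\).
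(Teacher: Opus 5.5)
Your proposal is correct and follows essentially the same route as the paper: you induce a regular \(G\)-field extension \(M\) via Lemma~\ref{lem:inducedRegularExtension}, transport the existential formula along the \(K\)-embedding of \(H\)-fields, and then apply p.e.c.\ of \(K\) as a \(G\)-field, viewing \(L_H\)-formulas as \(L_G\)-formulas. The final assertion is handled the same way, by noting that existentially closed implies p.e.c.
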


\begin{proof}
  Let \(L/K\) be a regular \(H\)-field extension, and let \(\varphi(x,a)\) be a quantifier-free \(L_H\)-formula with parameters \(a\) from \(K\) such that
  \[
    L\models\exists x\,\varphi(x,a).
  \]
  By Lemma~\ref{lem:inducedRegularExtension}, there exist a regular \(G\)-field extension \(M/K\) and a \(K\)-embedding of \(H\)-fields
  \[
    \iota:L\hookrightarrow M.
  \]
  Since \(\iota\) fixes \(K\) pointwise and preserves the \(H\)-action, one has
  \[
    M\models\exists x\,\varphi(x,a).
  \]
  We may regard \(\varphi(x,a)\) as an \(L_G\)-formula, since \(L_H\) is a sublanguage of \(L_G\).
  As \(M/K\) is regular and \(K\) is pseudo-existentially closed as a \(G\)-field, it follows that
  \[
    K\models\exists x\,\varphi(x,a).
  \]
  Therefore \(K\), with its action restricted to \(H\), is a pseudo-existentially closed \(H\)-field.

  Finally, every existentially closed \(G\)-field is pseudo-existentially closed as a \(G\)-field, since every regular \(G\)-field extension is, in particular, a \(G\)-field extension.
  The final assertion therefore follows from the first part.
\end{proof}

\subsection{Quotient fields and inflation}

If \(\pi:\Gamma\twoheadrightarrow Q\) has kernel \(D\), then inflation along \(\pi\) identifies \(Q\)-fields with \(\Gamma\)-fields on which every element of \(D\) acts trivially.
In particular, being p.e.c. in the latter class is not a new relative notion: it is precisely being p.e.c. as a \(Q\)-field (equivalently, as a \(\Gamma/D\)-field).

\begin{lemma}\label{lemma:Inflation}
  Let \(\pi:\Gamma\twoheadrightarrow Q\) be a surjective group homomorphism and let \(K\) be a \(Q\)-field.
  Let \(K_\pi\) denote the \(\Gamma\)-field with the same underlying field and action
  \[
    \gamma\cdot a:=\pi(\gamma)(a).
  \]
  Then:
  \begin{enumerate}[label=\textup{(\roman*)}]
    \item \(K\) is \(\aleph_0\)-saturated as a \(Q\)-field if and only if \(K_\pi\) is \(\aleph_0\)-saturated as a \(\Gamma\)-field;
    \item \(K\) is p.e.c. as a \(Q\)-field if and only if \(K_\pi\) is existentially closed in every regular \(\Gamma\)-field extension of \(K_\pi\) on which \(\ker(\pi)\) acts trivially.
  \end{enumerate}
\end{lemma}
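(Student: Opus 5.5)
The plan is to work entirely with the dictionary between \(L_Q\)-formulas and \(L_\Gamma\)-formulas induced by \(\pi\). For each \(\gamma\in\Gamma\), the symbol \(\sigma_\gamma\) is interpreted in \(K_\pi\) as \(\sigma_{\pi(\gamma)}\) is interpreted in \(K\). Since \(\pi\) is surjective, each \(q\in Q\) has a chosen preimage \(\tilde q\). So there are two syntactic translations. The first is \(\varphi\mapsto\varphi^\pi\), sending an \(L_\Gamma\)-formula to an \(L_Q\)-formula by replacing \(\sigma_\gamma\) with \(\sigma_{\pi(\gamma)}\). The second is \(\psi\mapsto\tilde\psi\), sending an \(L_Q\)-formula to an \(L_\Gamma\)-formula by replacing \(\sigma_q\) with \(\sigma_{\tilde q}\). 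Both translations preserve the number of free variables and quantifier-freeness. For every tuple \(a\) from \(K\) one has \(K_\pi\models\varphi(a)\iff K\models\varphi^\pi(a)\) and \(K\models\psi(a)\iff K_\pi\models\tilde\psi(a)\), by a routine induction on terms and formulas. The same holds for any \(Q\)-field \(L\) and its inflation \(L_\pi\).

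For (i), let \(A\subseteq K\) be finite. The translations send types over \(A\) in one language to types over \(A\) in the other. Suppose first that \(K\) is \(\aleph_0\)-saturated and let \(p\) be a complete \(L_\Gamma\)-type over \(A\) consistent with \(\mathrm{Th}(K_\pi,A)\). Then \(p^\pi=\{\varphi^\pi:\varphi\in p\}\) is finitely satisfiable in \(K\): a finite part is realized in some elementary extension of \(K_\pi\), and every model of \(\mathrm{Th}(K_\pi,A)\) is of the form \(N_\pi\). This last point holds because the sentences \(\forall x\,\sigma_\gamma(x)=\sigma_{\gamma'}(x)\) for \(\pi(\gamma)=\pi(\gamma')\) belong to the theory. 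So \(p^\pi\) is realized in \(K\), and any realization realizes \(p\) in \(K_\pi\). The converse direction uses \(\psi\mapsto\tilde\psi\) in exactly the same way.

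For (ii), first identify the regular \(\Gamma\)-field extensions \(M\) of \(K_\pi\) on which \(\ker\pi\) acts trivially. These are exactly the \(L_\pi\) for regular \(Q\)-field extensions \(L/K\). Indeed, the \(\Gamma\)-action on such an \(M\) factors through \(Q\), and regularity is a property of the underlying field extension alone. Now assume \(K\) is p.e.c. and \(M=L_\pi\) satisfies \(\exists x\,\varphi(x,a)\) with \(\varphi\) quantifier-free. Then \(L\models\exists x\,\varphi^\pi(x,a)\), so \(K\models\exists x\,\varphi^\pi(x,a)\), and hence \(K_\pi\models\exists x\,\varphi(x,a)\). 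The converse is symmetric, using \(\tilde\psi\).

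The only step needing care, and the main obstacle, is the remark in (i) that every elementary extension of \(K_\pi\) is again an inflation. This is what makes the type-translation argument sound, and it follows from the axioms just listed.
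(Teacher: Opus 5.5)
Your proposal is correct and follows essentially the same route as the paper. Both arguments use the translation $\gamma\mapsto\pi(\gamma)$ together with a section $Q\to\Gamma$, the fact that $\sigma_\gamma$ and $\sigma_{\gamma'}$ agree in $\operatorname{Th}(K_\pi)$ whenever $\pi(\gamma)=\pi(\gamma')$, and the identification of regular $\Gamma$-extensions with trivial $\ker(\pi)$-action as inflations of regular $Q$-extensions. The detour through elementary extensions in (i), which you flagged as the main obstacle, is harmless but unnecessary: each finite part of a complete type consistent with $\operatorname{Th}(K_\pi,A)$ is already realized in $K_\pi$ itself.
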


\begin{proof}
  On every \(\Gamma\)-field on which \(\ker(\pi)\) acts trivially, the action factors uniquely through \(\Gamma/\ker(\pi)\cong Q\).
  Conversely, every \(Q\)-field action inflates along \(\pi\).
  These operations are mutually inverse and do not change the underlying field, so they preserve regular extensions.

  Every formula in the \(\Gamma\)-field language translates to the \(Q\)-field formula obtained by replacing each symbol \(\gamma\) by \(\pi(\gamma)\).
  Conversely, fix a set-theoretic section \(s:Q\to\Gamma\) of \(\pi\), and replace each action symbol indexed by \(a\in Q\) by the symbol indexed by \(s(a)\).
  For the corresponding structures \(K\) and \(K_\pi\), these translations induce mutually inverse correspondences between complete types: in \(\operatorname{Th}(K_\pi)\), the symbols indexed by \(\gamma\) and \(s(\pi(\gamma))\) define the same automorphism.
  They also preserve existential formulas.
  This proves both assertions.
\end{proof}

\section{Cocycles on Baumslag--Solitar Groups}\label{sec:BS-cocycles}

Replacing \((m,n)\) by \((-m,-n)\) does not change the group, so we may and do assume \(m>0\).
Put
\[
  r:=\frac nm\in\Q^\times.
\]

\subsection{The \(\tau\)-exponent-sum homomorphism \(\ell\) and the 1-cocycle \(\beta\)}

The exponent sum in \(\tau\) defines a homomorphism
\[
  \ell:\BS(m,n)\longrightarrow\Z, \quad \ell(\sigma) = 0, \quad \ell(\tau) = 1.
\]
We regard the additive group
\[
  \Z\left[\frac1{mn}\right]
\]
as a \(\BS(m,n)\)-module by
\[
  \gamma\cdot a := r^{\ell(\gamma)}a.
\]
This is well-defined because both \(r=n/m\) and \(r^{-1}=m/n\) are units in \(\Z[1/(mn)]\).

\begin{proposition}\label{prop:beta}
  There is a unique 1-cocycle
  \[
    \beta:\BS(m,n)\longrightarrow\Z\left[\frac1{mn}\right]
  \]
  such that
  \[
    \beta(\sigma)=1,\quad \beta(\tau)=0,
  \]
  and
  \begin{equation}\label{eq:beta-cocycle}
    \beta(\gamma\delta) = \beta(\gamma)+r^{\ell(\gamma)}\beta(\delta)\quad (\gamma,\delta\in\BS(m,n)).
  \end{equation}
\end{proposition}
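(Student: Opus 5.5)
Uniqueness is immediate from the cocycle identity. Applying \eqref{eq:beta-cocycle} with \(\gamma\delta^{-1}\) and \(\delta\) shows that a cocycle is determined on products and inverses once it is known on generators. In particular \(\beta(1)=0\) and \(\beta(\gamma^{-1})=-r^{-\ell(\gamma)}\beta(\gamma)\). Since \(\sigma,\tau\) generate \(\BS(m,n)\), any two cocycles agreeing on \(\sigma\) and \(\tau\) agree everywhere.

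For existence I will use the standard correspondence between \(1\)-cocycles and splittings into a semidirect product. Let \(A:=\Z[1/(mn)]\) with the module structure above, and form \(A\rtimes\Z\), where \(k\in\Z\) acts by multiplication by \(r^k\). A homomorphism \(\Phi:\BS(m,n)\to A\rtimes\Z\) of the form \(\gamma\mapsto(\beta(\gamma),\ell(\gamma))\) has first coordinate a \(1\)-cocycle for the action through \(\ell\). Indeed, the multiplication rule \((a,k)(b,j)=(a+r^kb,k+j)\) is exactly \eqref{eq:beta-cocycle}. So it suffices to define \(\Phi\) on generators by
\[
  \Phi(\sigma)=(1,0),\quad \Phi(\tau)=(0,1),
\]
and to check the defining relation \(\tau\sigma^m\tau^{-1}=\sigma^n\) in \(A\rtimes\Z\), using von Dyck's theorem.

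The verification is a short computation. We have \(\Phi(\sigma)^m=(m,0)\) and \(\Phi(\sigma)^n=(n,0)\), the latter also valid for negative \(n\). Conjugating by \((0,1)\) gives \((0,1)(m,0)(0,-1)=(rm,0)=(n,0)\), because \(r=n/m\). Hence \(\Phi\) is well defined. Its second coordinate is a homomorphism to \(\Z\) sending \(\sigma\mapsto0\) and \(\tau\mapsto1\), so it equals \(\ell\). Its first coordinate is therefore a cocycle \(\beta\) with \(\beta(\sigma)=1\) and \(\beta(\tau)=0\).

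I do not expect a serious obstacle. The only point needing care is that \(A\) really is a module: multiplication by \(r^{\pm1}\) must preserve \(\Z[1/(mn)]\), and the paper already notes that \(r\) and \(r^{-1}\) are units there. One should also confirm that the semidirect-product law matches \eqref{eq:beta-cocycle} with the action written on the left, which it does.
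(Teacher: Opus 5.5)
Your proof is correct and is essentially the paper's argument: the paper defines the cocycle on the free group $\langle\sigma,\tau\rangle$ and checks the relator via $\beta(\tau\sigma^m\tau^{-1})=rm=n=\beta(\sigma^n)$, which is the same computation as your $(0,1)(m,0)(0,-1)=(n,0)$ in $A\rtimes\Z$. Packaging $\beta$ as the first coordinate of a homomorphism into the semidirect product and invoking von Dyck's theorem is the standard equivalent formulation; it makes the descent from the free group automatic rather than leaving it implicit.
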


\begin{proof}
  On the free group on \(\sigma,\tau\), the prescribed values determine a unique 1-cocycle for the module structure above.
  It remains only to check the defining relation.
  Since \(\beta(\tau^{-1})=0\), one has
  \[
    \beta(\tau\sigma^m\tau^{-1})=r\,\beta(\sigma^m)=rm=n=\beta(\sigma^n).
  \]
  Hence the 1-cocycle descends to \(\BS(m,n)\).
  Uniqueness follows from the generators.
\end{proof}

\begin{lemma}\label{lem:reduction}
  If \(N\geq1\) is coprime to \(mn\), there is a canonical surjective homomorphism of additive groups
  \[
    \operatorname{red}_N:\Z\left[\frac1{mn}\right]\longrightarrow\Z/N\Z
  \]
  whose kernel is
  \[
    N\Z\left[\frac1{mn}\right].
  \]
  Moreover, if \(S\subseteq\Z[1/(mn)]\) is finite, then \(\operatorname{red}_N\) is injective on \(S\) for all but finitely many primes \(N\) coprime to \(mn\).
\end{lemma}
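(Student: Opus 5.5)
The plan is to construct \(\operatorname{red}_N\) by extending the canonical reduction \(\Z\to\Z/N\Z\) along the localization \(\Z\to\Z[1/(mn)]\). Since \(\gcd(N,mn)=1\), the class of \(mn\) is a unit in the ring \(\Z/N\Z\). By the universal property of localization, the ring map \(\Z\to\Z/N\Z\) therefore extends uniquely to a ring homomorphism \(\Z[1/(mn)]\to\Z/N\Z\), given by \(a/(mn)^k\mapsto \bar a\,\overline{mn}^{\,-k}\). We take \(\operatorname{red}_N\) to be its underlying additive map. Uniqueness of the extension is what justifies calling it canonical. Surjectivity is immediate, because the map is already surjective on \(\Z\).

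For the kernel, the inclusion \(N\Z[1/(mn)]\subseteq\ker\) is clear, since \(\operatorname{red}_N(N)=0\) and the map is a ring homomorphism. For the reverse inclusion, take \(x=a/(mn)^k\) in the kernel. Then \(\bar a\,\overline{mn}^{\,-k}=0\), and because \(\overline{mn}\) is a unit this forces \(\bar a=0\). So \(a=Nb\) for some \(b\in\Z\), and \(x=N\cdot b/(mn)^k\in N\Z[1/(mn)]\).

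For the final assertion, let \(S\) be finite. Consider the finite set \(D=\{s-s':s,s'\in S,\ s\neq s'\}\) of nonzero elements. The map \(\operatorname{red}_N\) is injective on \(S\) precisely when no element of \(D\) lies in the kernel. Write each \(d\in D\) as \(a_d/(mn)^{k_d}\) with \(a_d\in\Z\setminus\{0\}\). By the kernel computation, \(d\in\ker\operatorname{red}_N\) if and only if \(N\mid a_d\). A prime \(N\) divides the nonzero integer \(a_d\) only if it is one of the finitely many prime factors of \(a_d\). Excluding these primes for every \(d\in D\) excludes only finitely many primes, and for all remaining primes coprime to \(mn\) the map is injective on \(S\).

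There is no serious obstacle. The only step needing care is the kernel description: the step \(N\mid a\Rightarrow x\in N\Z[1/(mn)]\) relies on writing \(x\) with a denominator that is a power of \(mn\), and on that denominator being invertible mod \(N\).
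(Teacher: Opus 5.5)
Your proposal is correct and follows essentially the same route as the paper: you extend reduction along the localization because \(mn\) is a unit modulo \(N\), and you get the kernel by clearing a power of \(mn\) from the denominator. For the final assertion you, like the paper, exclude the finitely many primes dividing the numerators of the nonzero differences of elements of \(S\); your write-up just spells out these steps in more detail.
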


\begin{proof}
  Since \(mn\) is a unit modulo \(N\), reduction extends uniquely from \(\Z\) to the localization \(\Z[1/(mn)]\), and clearing a denominator proves the kernel statement.
  For the final assertion, write the finitely many non-zero differences of elements of \(S\) with denominators supported on the prime divisors of \(mn\).
  Reduction can identify two elements of \(S\) only at a prime dividing one of these numerators, so only finitely many primes must be excluded.
\end{proof}

\subsection{The cyclotomic condition}

\begin{definition}\label{def:admissible}
  Let \(K\) be a \(\BS(m,n)\)-field.
  A prime \(q\) is
  \emph{admissible for \(\BS(m,n)\) over \(K\)} if it is coprime to \(mn\)
  and to the characteristic exponent of \(K\).
\end{definition}

Let \(K\) be a \(\BS(m,n)\)-field and let \(q\) be an admissible prime.
Put
\[
  w:=nm^{-1}\in(\Z/q\Z)^\times,
\]
and also write \(w\in\{1,\ldots,q-1\}\) for its standard integer representative.
Whenever a residue class modulo \(q\) occurs as an exponent of a \(q\)-th root of unity, the exponent is understood modulo \(q\).
For a variable \(z\), let
\[
  \theta_q(z):\equiv z^q=1\wedge z\neq1\wedge\sigma(z)=z\wedge\tau(z)=z^w.
\]

\begin{definition}\label{def:cyclotomic-condition}
  We say that \(K\) \emph{satisfies the cyclotomic condition \(\theta_q\)} if it contains a primitive \(q\)-th root of unity \(\zeta\) satisfying \(\theta_q(\zeta)\).
  More generally, if
  \[
    \pi:\BS(m,n)\twoheadrightarrow Q
  \]
  is a surjective homomorphism, a \(Q\)-field \emph{satisfies \(\theta_q\) relative to \(\pi\)} if \(q\) is admissible for \(\BS(m,n)\) over its underlying field and it contains a primitive \(q\)-th root of unity satisfying the same equations, with \(\sigma,\tau\) interpreted through \(\pi\).
\end{definition}

If \(q=2\) is admissible, then \(mn\) is odd and \(\operatorname{char}K\neq2\).
In this case \(w=1\), the unique primitive second root of unity is \(-1\), and \(\theta_2\) is automatic.
Thus for \(\BS(1,1)\cong\Z^2\) in characteristic different from \(2\), the cyclotomic condition imposes no additional hypothesis, and the argument for ambient groups uses \(\nu=q-1=1\).

\subsection{A 1-cocycle}

For a field \(K\), write
\[
  \mu_q(K):=\{u\in K^\times:u^q=1\}.
\]

\begin{lemma}\label{lem:oneCocycle}
  Let \(K\) be a \(\BS(m,n)\)-field, let \(q\) be admissible for \(\BS(m,n)\) over \(K\), and let \(\zeta\in K\) satisfy \(\theta_q(\zeta)\).
  Define
  \[
    \chi:\BS(m,n)\longrightarrow\mu_q(K),\quad \chi(\gamma):=\zeta^{\operatorname{red}_q(\beta(\gamma))}.
  \]
  Then \(\chi\) is a \(1\)-cocycle.
  Moreover,
  \[
    \chi(\sigma)=\zeta,\quad \chi(\tau)=1,\quad \chi|_{\ker\beta}=1.
  \]
\end{lemma}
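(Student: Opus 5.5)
We verify the \(1\)-cocycle identity directly from the cocycle identity \eqref{eq:beta-cocycle} for \(\beta\), after first fixing which \(\BS(m,n)\)-action on \(\mu_q(K)\) is meant. The action is the restriction of the field action. Since \(\zeta\) is a primitive \(q\)-th root of unity, \(\mu_q(K)=\langle\zeta\rangle\) is cyclic of order \(q\), and every automorphism of \(K\) maps it to itself. By \(\theta_q(\zeta)\), \(\sigma(\zeta)=\zeta\) and \(\tau(\zeta)=\zeta^w\). The first key step is then to show
\[
  \gamma(\zeta)=\zeta^{w^{\ell(\gamma)}}\quad(\gamma\in\BS(m,n)),
\]
where the exponent lies in \((\Z/q\Z)^\times\). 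The map \(\gamma\mapsto\bigl(\text{exponent of }\gamma \text{ on }\zeta\bigr)\in(\Z/q\Z)^\times\) is a homomorphism, because the automorphism group of a cyclic group of order \(q\) is abelian. This homomorphism sends \(\sigma\mapsto1\) and \(\tau\mapsto w\). The homomorphism \(\gamma\mapsto w^{\ell(\gamma)}\) takes the same values on the generators, so the two agree on all of \(\BS(m,n)\).

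The second step is to check that \(\operatorname{red}_q\) intertwines the module structures, that is,
\[
  \operatorname{red}_q\bigl(r^{k}a\bigr)=w^{k}\operatorname{red}_q(a)
\]
for \(k\in\Z\) and \(a\in\Z[1/(mn)]\). This holds because \(\operatorname{red}_q\) is the restriction of the ring homomorphism \(\Z[1/(mn)]\to\Z/q\Z\) given by Lemma~\ref{lem:reduction}, which is a ring map since \(q\nmid mn\). That ring map sends \(r=n/m\) to \(nm^{-1}=w\), and \(r^{-1}\) to \(w^{-1}\).

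Combining the two steps proves the cocycle identity. We have
\[
  \chi(\gamma\delta)=\zeta^{\operatorname{red}_q(\beta(\gamma))+w^{\ell(\gamma)}\operatorname{red}_q(\beta(\delta))}
  =\chi(\gamma)\cdot\bigl(\zeta^{w^{\ell(\gamma)}}\bigr)^{\operatorname{red}_q(\beta(\delta))}
  =\chi(\gamma)\cdot\gamma\bigl(\chi(\delta)\bigr).
\]
Here exponents are read modulo \(q\), which is legitimate since \(\zeta^q=1\). This is the \(1\)-cocycle identity for the action of \(\BS(m,n)\) on \(\mu_q(K)\) written multiplicatively.

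The remaining values follow at once from Proposition~\ref{prop:beta}. Since \(\beta(\sigma)=1\), we get \(\chi(\sigma)=\zeta\). Since \(\beta(\tau)=0\), we get \(\chi(\tau)=1\). If \(\beta(\gamma)=0\), then \(\chi(\gamma)=\zeta^0=1\), so \(\chi|_{\ker\beta}=1\). Here \(\ker\beta\) means the set \(\beta^{-1}(0)\), since \(\beta\) itself is only a cocycle.

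The only point requiring care is the first step. One must observe that \(\tau\) acts on \(\mu_q(K)\) by a power map compatible with the relation \(\tau\sigma^m\tau^{-1}=\sigma^n\). This compatibility is automatic here: \(\sigma\) acts trivially on \(\mu_q(K)\), so the relation imposes nothing, and the abelianness of \(\operatorname{Aut}(\mu_q)\) does the rest.
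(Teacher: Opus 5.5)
Your proof is correct and follows the same route as the paper. You reduce the cocycle identity \eqref{eq:beta-cocycle} modulo \(q\) via the ring map \(\Z[1/(mn)]\to\Z/q\Z\) sending \(r\) to \(w\), and combine it with \(\gamma(\zeta)=\zeta^{w^{\ell(\gamma)}}\); the paper states both facts without justification, and you verify them, the latter by comparing two homomorphisms into \((\Z/q\Z)^\times\) on generators. One small correction: the exponent map is a homomorphism simply because \((\gamma\delta)(\zeta)=\gamma(\zeta^b)=\zeta^{ab}\) when \(\gamma(\zeta)=\zeta^a\) and \(\delta(\zeta)=\zeta^b\), not because \(\operatorname{Aut}(\mu_q(K))\) is abelian, though this does not affect the argument.
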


\begin{proof}
  For \(\gamma,\delta\in\BS(m,n)\), Equation~\eqref{eq:beta-cocycle} gives
  \[
    \operatorname{red}_q(\beta(\gamma\delta))=\operatorname{red}_q(\beta(\gamma))+w^{\ell(\gamma)}\operatorname{red}_q(\beta(\delta)).
  \]
  Since
  \[
    \gamma(\zeta)=\zeta^{w^{\ell(\gamma)}},
  \]
  it follows that
  \[
    \chi(\gamma\delta)=\chi(\gamma)\,\gamma\bigl(\chi(\delta)\bigr).
  \]
  The final assertions follow immediately from \(\beta(\sigma)=1\), \(\beta(\tau)=0\), and the definition of \(\chi\).
\end{proof}

\section{1-Cocycle Realization}\label{sec:cocycle}

The construction in this section is a \(K^\times\)-valued analogue of the standard induction/coinduction construction for a subgroup and the associated Eckmann--Shapiro formalism; compare \cite[Chapter~III, \S\S5--6]{Brown1982}.
We give the explicit formula because we need an explicit induced action realizing a prescribed \(1\)-cocycle while keeping a specified normal subgroup acting trivially.

\begin{lemma}\label{lem:cocycleRealization}
  Let \(H\leq G\), let \(K\) be a \(G\)-field, and let \(\chi:H\to K^\times\) satisfy the \(1\)-cocycle condition
  \[
    \chi(h_1h_2)=\chi(h_1)\,h_1\bigl(\chi(h_2)\bigr)\quad(h_1,h_2\in H).
  \]
  Choose a section
  \[
    \rho:G/H\longrightarrow G,\quad \rho(H)=1,
  \]
  and put
  \[
    \alpha(\gamma,\lambda):=\rho(\gamma\lambda)^{-1}\gamma\rho(\lambda)\quad(\gamma\in G,\ \lambda\in G/H).
  \]
  If the elements \(z_\lambda\), \(\lambda\in G/H\), are algebraically independent over \(K\), then the formulas
  \begin{equation}\label{eq:cochain-action}
    \gamma(z_\lambda):=\rho(\gamma\lambda)\bigl(\chi(\alpha(\gamma,\lambda))\bigr)z_{\gamma\lambda}
  \end{equation}
  extend the given action on \(K\) to a \(G\)-field structure on the purely transcendental extension
  \[
    E:=K\bigl(z_\lambda:\lambda\in G/H\bigr).
  \]
  In particular,
  \[
    h(z_H)=\chi(h)z_H\quad(h\in H).
  \]

  Moreover, let \(D\trianglelefteq G\) satisfy \(D\subseteq H\) and \(\chi|_D=1\).
  If \(D\) acts trivially on \(K\), then it acts trivially on \(E\).
\end{lemma}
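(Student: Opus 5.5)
We will define the action on the polynomial ring \(K[z_\lambda:\lambda\in G/H]\) by letting each \(\gamma\in G\) act on coefficients through the given action on \(K\) and on the variables by formula~\eqref{eq:cochain-action}. Since \(\lambda\mapsto\gamma\lambda\) is a bijection of \(G/H\) and the coefficients \(\rho(\gamma\lambda)(\chi(\alpha(\gamma,\lambda)))\) lie in \(K^\times\), each \(\gamma\) permutes the variables up to non-zero scalars. It therefore defines a \(\gamma\)-semilinear ring automorphism of the polynomial ring, whose inverse is built the same way. Each such automorphism extends uniquely to the fraction field \(E\). The remaining work is to check that \(\gamma\mapsto(\text{this automorphism})\) is a group action, which by uniqueness of extension it suffices to verify on \(K\) (already given) and on the generators \(z_\lambda\).

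The main step is the composition identity \((\gamma\delta)(z_\lambda)=\gamma(\delta(z_\lambda))\). Computing the right side gives
\[
\gamma\bigl(\rho(\delta\lambda)(\chi(\alpha(\delta,\lambda)))\bigr)\cdot\rho(\gamma\delta\lambda)\bigl(\chi(\alpha(\gamma,\delta\lambda))\bigr)\,z_{\gamma\delta\lambda}.
\]
We will rewrite \(\gamma\rho(\delta\lambda)=\rho(\gamma\delta\lambda)\,\alpha(\gamma,\delta\lambda)\), which is the defining identity of \(\alpha\) as in Lemma~\ref{lem:inducedRegularExtension}. Then both factors carry the outer automorphism \(\rho(\gamma\delta\lambda)\), and the product becomes
\[
\rho(\gamma\delta\lambda)\Bigl(\chi(\alpha(\gamma,\delta\lambda))\cdot \alpha(\gamma,\delta\lambda)\bigl(\chi(\alpha(\delta,\lambda))\bigr)\Bigr)z_{\gamma\delta\lambda}.
\]
By the cocycle condition for \(\chi\) on \(H\), this equals \(\rho(\gamma\delta\lambda)\bigl(\chi(\alpha(\gamma,\delta\lambda)\alpha(\delta,\lambda))\bigr)z_{\gamma\delta\lambda}\). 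The identity \(\alpha(\gamma\delta,\lambda)=\alpha(\gamma,\delta\lambda)\alpha(\delta,\lambda)\) recorded in Lemma~\ref{lem:inducedRegularExtension} then identifies this with \((\gamma\delta)(z_\lambda)\). For the identity element, \(\alpha(1,\lambda)=1\) and \(\chi(1)=1\) (from the cocycle identity with \(h_1=h_2=1\)), so \(1\) acts trivially. The special case follows because \(\rho(H)=1\) and \(\alpha(h,H)=h\), giving \(h(z_H)=\chi(h)z_H\).

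For the final assertion, let \(d\in D\). Then \(d\) is trivial on \(K\), and we must show \(d(z_\lambda)=z_\lambda\). Since \(D\) is normal, \(d\) fixes every coset, because \(d\rho(\lambda)H=\rho(\lambda)(\rho(\lambda)^{-1}d\rho(\lambda))H=\rho(\lambda)H\) with \(\rho(\lambda)^{-1}d\rho(\lambda)\in D\subseteq H\). Hence \(\alpha(d,\lambda)=\rho(\lambda)^{-1}d\rho(\lambda)\in D\), so \(\chi(\alpha(d,\lambda))=1\) and therefore \(d(z_\lambda)=z_\lambda\). As \(d\) is also trivial on \(K\), it is trivial on \(E\). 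I expect the only real obstacle to be bookkeeping in the composition identity: one must pull \(\gamma\) correctly through \(\rho(\delta\lambda)\) so that the twisted cocycle condition applies exactly.
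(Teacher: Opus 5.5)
Your proposal is correct and follows essentially the same route as the paper: you combine the identities \(\gamma\rho(\lambda)=\rho(\gamma\lambda)\alpha(\gamma,\lambda)\) and \(\alpha(\gamma\delta,\lambda)=\alpha(\gamma,\delta\lambda)\alpha(\delta,\lambda)\) with the cocycle condition for \(\chi\) to get the composition law. For the final assertion, normality gives \(d\lambda=\lambda\) and \(\alpha(d,\lambda)\in D\), exactly as in the paper; you also spell out two points the paper leaves implicit, namely that each \(\gamma\) gives a well-defined semilinear automorphism of the polynomial ring extending to \(E\), and that the identity element acts trivially.
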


\begin{proof}
  For \(\gamma,\delta\in G\) and \(\lambda\in G/H\), one has
  \[
    \gamma\rho(\lambda)=\rho(\gamma\lambda)\alpha(\gamma,\lambda)
  \]
  and
  \[
    \alpha(\gamma\delta,\lambda)=\alpha(\gamma,\delta\lambda)\alpha(\delta,\lambda).
  \]
  Using these identities together with the \(1\)-cocycle identity for \(\chi\) gives
  \[
    \gamma\bigl(\delta(z_\lambda)\bigr)=(\gamma\delta)(z_\lambda),
  \]
  so \eqref{eq:cochain-action} defines a \(G\)-action on \(E\).
  Since \(hH=H\), \(\rho(H)=1\), and \(\alpha(h,H)=h\) for \(h\in H\), one gets
  \[
    h(z_H)=\chi(h)z_H.
  \]

  Finally, suppose that \(D\trianglelefteq G\), \(D\subseteq H\), and \(\chi|_D=1\).
  For \(d\in D\) and \(\lambda\in G/H\), normality gives \(d\lambda=\lambda\) and
  \[
    \alpha(d,\lambda)=\rho(\lambda)^{-1}d\rho(\lambda)\in D.
  \]
  Hence \eqref{eq:cochain-action} gives \(d(z_\lambda)=z_\lambda\).
  If \(D\) acts trivially on \(K\), it therefore acts trivially on all of \(E\).
\end{proof}

\begin{corollary}\label{cor:globalCocycleRealization}
  Let \(K\) be a \(G\)-field and let \(\chi:G\to K^\times\) be a \(1\)-cocycle.
  If \(z\) is transcendental over \(K\), then the formula
  \[
    \gamma(z)=\chi(\gamma)z\quad(\gamma\in G)
  \]
  extends the action on \(K\) to a \(G\)-field structure on \(K(z)\).
  If \(D\trianglelefteq G\), \(\chi|_D=1\), and \(D\) acts trivially on \(K\), then \(D\) acts trivially on \(K(z)\).
\end{corollary}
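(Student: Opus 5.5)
The corollary is the special case \(H=G\) of Lemma~\ref{lem:cocycleRealization}, and I will prove it by specializing that lemma. With \(H=G\), the coset space \(G/H\) is the single point \(\{G\}\). The section \(\rho\) is therefore forced to be \(\rho(G)=1\). The associated map is then \(\alpha(\gamma,G)=\rho(\gamma G)^{-1}\gamma\rho(G)=\gamma\). The family \((z_\lambda)_{\lambda\in G/H}\) reduces to the single element \(z:=z_G\). Algebraic independence of a one-element family over \(K\) means exactly that \(z\) is transcendental over \(K\), so the hypothesis of the lemma holds and \(E=K(z)\).

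Next I substitute these values into formula~\eqref{eq:cochain-action}. For \(\gamma\in G\) it gives
\[
  \gamma(z)=\rho(G)\bigl(\chi(\alpha(\gamma,G))\bigr)z_{\gamma G}=\chi(\gamma)z .
\]
The lemma says this extends the action on \(K\) to a \(G\)-field structure on \(K(z)\), which is the first assertion. Alternatively, one can check directly that \((\gamma\delta)(z)=\chi(\gamma\delta)z=\chi(\gamma)\gamma(\chi(\delta))z=\gamma(\chi(\delta)z)=\gamma(\delta(z))\) using the cocycle identity. Each \(\gamma\) then acts on \(K(z)\) as an automorphism, because it is the unique extension of \(\gamma|_K\) sending \(z\mapsto\chi(\gamma)z\) with \(\chi(\gamma)\neq0\), and its inverse is the map given by \(\gamma^{-1}\). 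Since \(\chi(1)=1\), as follows from \(\chi(1)=\chi(1)\cdot1(\chi(1))\), the identity element acts trivially.

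For the second assertion I apply the final part of Lemma~\ref{lem:cocycleRealization} with \(H=G\). Every normal subgroup \(D\) satisfies \(D\subseteq H=G\), and the hypotheses \(\chi|_D=1\) and \(D\) acting trivially on \(K\) are exactly those of the lemma. Hence \(D\) acts trivially on \(E=K(z)\). The direct check is equally short: \(d(z)=\chi(d)z=z\), and \(d\) fixes \(K\), so \(d\) fixes \(K(z)\).

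I expect no real obstacle here. The only points to verify are the degenerate identifications \(\rho=1\) and \(\alpha(\gamma,G)=\gamma\), which are immediate.
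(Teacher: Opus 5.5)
Your proposal is correct and matches the paper's proof. The paper likewise applies Lemma~\ref{lem:cocycleRealization} with \(H=G\) and notes that the coset space is a single point, so the construction reduces to the displayed formula; your direct verification of the action and of the trivial \(D\)-action is a harmless extra check.
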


\begin{proof}
  Apply Lemma~\ref{lem:cocycleRealization} with \(H=G\).
  Then \(G/H\) has one element, so the construction reduces to the displayed formula.
\end{proof}

\section{The Realization Construction}\label{sec:realization-construction}
Fix a prime \(q\); we keep this prime fixed throughout Sections~\ref{sec:realization-construction}--\ref{sec:BS-obstruction}.
Whenever a base field satisfying \(\theta_q\) is assumed, \(q\) is admissible by Definition~\ref{def:cyclotomic-condition}.

\subsection{The realization problem}

Let \(K\) be a \(\BS(m,n)\)-field and let \(c\in K\).
Let \(\mathcal F=\langle\sigma,\tau\rangle\) be the free group and let
\[
  \pi:\mathcal F\twoheadrightarrow\BS(m,n)
\]
be the natural map.
Regard \(K\) as an \(\mathcal F\)-field through \(\pi\), and let
\[
  \widetilde B_\bullet(c):\mathcal F\longrightarrow K
\]
be the unique \(1\)-cocycle such that
\[
  \widetilde B_\sigma(c)=c,\quad \widetilde B_\tau(c)=0.
\]
Fix a set-theoretic section \(s:\BS(m,n)\to\mathcal F\) such that \(s(\sigma^j)=\sigma^j\) for \(j\in\Z\) and \(s(\tau)=\tau\), and put
\[
  B_\gamma(c):=\widetilde B_{s(\gamma)}(c)\quad(\gamma\in\BS(m,n)).
\]
Thus every \(B_\gamma(c)\) is a fixed finite \(\BS(m,n)\)-field term in \(c\).

Since the \(\mathcal F\)-action on \(K\) factors through \(\BS(m,n)\), the cocycle \(\widetilde B_\bullet(c)\) descends to \(\BS(m,n)\) if and only if
\[
  \tau(B_{\sigma^m}(c))=B_{\sigma^n}(c).
\]
In this case their values are independent of the chosen representatives and give the unique map
\[
  B_\bullet(c):\BS(m,n)\longrightarrow K
\]
satisfying
\[
  B_1(c)=0,\quad B_\sigma(c)=c,\quad B_\tau(c)=0,
\]
and
\[
  B_{\gamma\delta}(c)=B_\gamma(c)+\gamma\bigl(B_\delta(c)\bigr)\quad(\gamma,\delta\in\BS(m,n)).
\]

Suppose that \(L/K\) is a \(\BS(m,n)\)-field extension and that \(x\in L\) satisfies \(\sigma(x)=x+c\) and \(\tau(x)=x\).
Induction on words in \(\mathcal F\) gives
\[
  u(x)-x=\widetilde B_u(c)\quad(u\in\mathcal F).
\]
Hence \(\widetilde B_\bullet(c)\) descends to \(\BS(m,n)\), and
\[
  \gamma(x)=x+B_\gamma(c)\quad(\gamma\in\BS(m,n)).
\]
Conversely, if the displayed descent condition holds and \(x\) is transcendental over \(K\), then the formulas \(\gamma(x)=x+B_\gamma(c)\) define a \(\BS(m,n)\)-action on \(K(x)\) extending the action on \(K\).

\begin{remark}
  If \(\BS(m,n)\) acts trivially on \(K\), the descent condition is
  \[
    mc=nc.
  \]
  Thus, if \(\operatorname{char}K=0\) and \(m\neq n\), it fails for \(c=1\); in particular, no such cocycle exists for the trivial \(\BS(1,2)\)-action with \(c=1\).
\end{remark}

When \(m=n=1\) and \(j>0\), one has
\[
  B_{\sigma^j}(c)=c+\sigma(c)+\cdots+\sigma^{j-1}(c),
\]
the iterated sum appearing in Kikyo's presentation of Hrushovski's argument \cite{Kikyo2004}.

\begin{definition}\label{def:realizationProblem}
  The \emph{realization problem} \(\psi(x,y,c,e,g)\) is given by
  \[
    \psi(x,y,c,e,g) :\equiv x\neq0\wedge\sigma(x)=x+c\wedge\tau(x)=x\wedge y^q=ex\wedge\tau(y)=gy.
  \]
\end{definition}

Hrushovski's realization problem for two commuting automorphisms, as presented by Kikyo \cite{Kikyo2004}, is
\[
  \sigma(x)=x+c,\quad \tau(x)=x+c,\quad y^3=x,\quad \tau(y)=\zeta\sigma(y),
\]
where \(\zeta\) is a primitive cube root of unity.
Definition~\ref{def:realizationProblem} replaces the simultaneous translations by \(\tau(x)=x\), the cubic cover by \(y^q=ex\), and the fixed twist \(\zeta\sigma(y)\) by \(\tau(y)=gy\).
Most importantly, the root of unity \(\zeta\) no longer occurs in the realization problem itself; it enters only through the cyclotomic condition \(\theta_q\) and the auxiliary cocycle \(\chi\).

\subsection{Regular realization}\label{subsec:regular-realization}
We introduce the following partial type, which gives an explicit sufficient condition for solving the realization problem.
When \(k<0\), an equation of the form \(u=g^k v\), in a context where \(g\neq0\), is an abbreviation for the equivalent ring-language equation \(g^{-k}u=v\).
Thus the formulas below are genuine first-order formulas in the language of fields with the group action.

\begin{definition}\label{def:Sigma}
  Let \(\Sigma(c,e,g)\) be the partial type consisting of
  \begin{enumerate}
    \item \(\tau(B_{\sigma^m}(c))=B_{\sigma^n}(c)\);
    \item \(eg\neq0\);
    \item for every \(\gamma\in\ker\beta\), the formulas \(\gamma(g)=g\) and \(\gamma(e)=g^{q\ell(\gamma)}e\);
    \item all formulas in \(\left\{ B_\gamma(c)=B_\delta(c) \colon \beta(\gamma)=\beta(\delta) \right\} \cup \left\{ B_\gamma(c)\neq B_\delta(c) \colon \beta(\gamma)\neq\beta(\delta) \right\}\);
  \end{enumerate}
\end{definition}

\begin{proposition}\label{prop:SigmaRealization}
  Suppose that \(K\) is a \(\BS(m,n)\)-field with \(\operatorname{char}K\neq q\) and that \(c,e,g\in K\) realize the partial type \(\Sigma(c,e,g)\).
  Then there exists a \(\BS(m,n)\)-field extension \(L/K\) such that \(L/K\) is regular as an extension of pure fields and \(L\) contains a solution of the realization problem \(\psi(x,y,c,e,g)\).
\end{proposition}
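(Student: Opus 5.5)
The plan is to build \(L\) in two layers: first a transcendental \(x\) with the prescribed additive cocycle action, then a family of Kummer radicals indexed by the cosets of \(H:=\ker\beta\), using the induction formalism of Lemma~\ref{lem:cocycleRealization}.

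\emph{The layer \(x\).} By item~1 of \(\Sigma\), the cocycle \(\widetilde B_\bullet(c)\) descends to \(\BS(m,n)\). Taking \(x\) transcendental over \(K\), the discussion preceding Definition~\ref{def:realizationProblem} gives a \(\BS(m,n)\)-action on \(K(x)\) with \(\gamma(x)=x+B_\gamma(c)\). In particular \(\sigma(x)=x+c\), \(\tau(x)=x\), and \(x\neq0\).

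\emph{Properties of \(H\).} The cocycle identity \eqref{eq:beta-cocycle} gives \(\beta(\gamma^{-1})=-r^{-\ell(\gamma)}\beta(\gamma)\). It follows that \(H\) is a subgroup containing \(\tau\), and that \(\beta(\gamma)=\beta(\gamma')\) if and only if \(\gamma H=\gamma' H\). Thus item~4 of \(\Sigma\) says two things: \(B_\gamma(c)\) depends only on \(\gamma H\), and distinct cosets give distinct values \(b_\lambda:=B_{\rho(\lambda)}(c)\). In particular \(B_h(c)=B_1(c)=0\) for \(h\in H\). On \(H\) the map \(h\mapsto g^{\ell(h)}\) is a multiplicative \(1\)-cocycle, because \(h_1(g)=g\) by item~3. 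This is the cocycle that should govern \(y\).

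\emph{The layer of radicals.} Fix a section \(\rho\) with \(\rho(H)=1\) and the map \(\alpha\) as in Lemma~\ref{lem:cocycleRealization}. Adjoin to \(K(x)\) elements \(y_\lambda\), one for each \(\lambda\in\BS(m,n)/H\), subject to
\[
  y_\lambda^q=a_\lambda(x+b_\lambda),\qquad a_\lambda:=\rho(\lambda)(e)\neq0 .
\]
Here \(a_\lambda\neq0\) by item~2. Define the action by analogy with \eqref{eq:cochain-action}:
\[
  \gamma(y_\lambda):=\rho(\gamma\lambda)\bigl(g^{\ell(\alpha(\gamma,\lambda))}\bigr)\,y_{\gamma\lambda}.
\]
To see that this respects the defining relations, write \(\alpha=\alpha(\gamma,\lambda)\in H\) and use \(\gamma\rho(\lambda)=\rho(\gamma\lambda)\alpha\). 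Item~3 gives \((\gamma\rho(\lambda))(e)=\rho(\gamma\lambda)\bigl(g^{q\ell(\alpha)}e\bigr)\). The cocycle identity for \(B\), together with \(B_\alpha(c)=0\), gives \(B_{\gamma\rho(\lambda)}(c)=b_{\gamma\lambda}\). Hence \(\gamma\) carries the relation for \(y_\lambda\) to the \(q\)-th power of the relation for \(y_{\gamma\lambda}\). Therefore each \(\gamma\) induces a ring endomorphism of the presented algebra. The group law follows exactly as in the proof of Lemma~\ref{lem:cocycleRealization}, from the identity for \(\alpha\) and the cocycle property of \(g^{\ell(\cdot)}\). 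Setting \(y:=y_H\) and \(L:=K(x)(y_\lambda:\lambda)\), we get \(y^q=ex\) and \(\tau(y)=g^{\ell(\tau)}y=gy\). So \((x,y)\) solves \(\psi\).

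\emph{Regularity.} This is the main obstacle, because it is where the distinctness half of item~4 is needed. Work with finite subsets of the index set and then pass to the directed union. Over \(\overline K(x)\), the elements \(a_\lambda(x+b_\lambda)\) have pairwise distinct simple zeros \(x=-b_\lambda\). Hence they are multiplicatively independent modulo \(q\)-th powers in \(\overline K(x)^\times\). Since \(\operatorname{char}K\neq q\), Kummer theory (after adjoining \(\mu_q\), which lies in \(\overline K\)) shows that \(\overline K(x)\otimes_{K(x)}K(x)[y_\lambda:\lambda\in S]\) is a field of degree \(q^{|S|}\). Concretely, one argues via valuations at the places \(x=-b_\lambda\). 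So the presented algebra is a domain, indeed a field, and it stays a domain after \(\otimes_K\overline K\). Finally, \(K(x)/K\) is purely transcendental, hence regular. Therefore \(L/K\) is regular. The extension is separable because the polynomials \(Y^q-a\) are separable when \(q\) differs from the characteristic.
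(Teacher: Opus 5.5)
Your proposal is correct and follows essentially the same route as the paper: the transcendental \(x\) with \(\gamma(x)=x+B_\gamma(c)\), the cocycle \(h\mapsto g^{\ell(h)}\) on \(H=\ker\beta\) induced to \(q\)-th roots of \(W_\lambda=\rho(\lambda)(ex)=\rho(\lambda)(e)(x+b_\lambda)\), and regularity via the valuation/Kummer argument over \(\overline K(x)\) on finite subfamilies, followed by a directed union. The differences are cosmetic: you act directly on the presented algebra and check the relations via \(B_{\alpha(\gamma,\lambda)}(c)=0\), whereas the paper acts on \(F[Z_\lambda]\) and shows the ideal is \(G\)-invariant using \(h(ex)=\chi(h)^q\,ex\). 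One wording slip: \(\gamma\) sends the relation for \(y_\lambda\) to the relation for \(y_{\gamma\lambda}\) multiplied by the unit \(\rho(\gamma\lambda)\bigl(g^{\ell(\alpha(\gamma,\lambda))}\bigr)^q\), not to its \(q\)-th power.
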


\begin{proof}
  Put \(G:=\BS(m,n)\) and \(H:=\ker\beta\).
  The cocycle identity shows that \(H\) is a subgroup of \(G\).
  Moreover, for \(\gamma,\delta\in G\),
  \[
    \beta(\gamma^{-1}\delta)=r^{-\ell(\gamma)}\bigl(\beta(\delta)-\beta(\gamma)\bigr).
  \]
  Consequently,
  \[
    \beta(\gamma)=\beta(\delta)\quad\Longleftrightarrow\quad \gamma^{-1}\delta\in H\quad\Longleftrightarrow\quad \gamma H=\delta H.
  \]
  Condition~\textup{(1)} of \(\Sigma\) says exactly that the assignments \(\sigma(x)=x+c\) and \(\tau(x)=x\) respect the Baumslag--Solitar relation, so they extend the \(G\)-action to
  \[
    F:=K(x).
  \]
  By the preceding discussion, this action satisfies
  \[
    \gamma(x)=x+B_\gamma(c).
  \]
  The equivalence above and condition~\textup{(4)} give
  \[
    \Stab_G(x)=H.
  \]
  Define
  \[
    \chi:H\longrightarrow K^\times,\quad \chi(h):=g^{\ell(h)}.
  \]
  By condition~\textup{(3)}, every element of \(H\) fixes \(g\), so the homomorphism property of \(\ell\) shows that \(\chi\) is a \(1\)-cocycle.
  Choose a section \(\rho:G/H\to G\) with \(\rho(H)=1\), and put
  \[
    \alpha(\gamma,\lambda):=\rho(\gamma\lambda)^{-1}\gamma\rho(\lambda).
  \]
  By Lemma~\ref{lem:cocycleRealization}, for algebraically independent elements \(Z_\lambda\), \(\lambda\in G/H\), the formulas
  \begin{equation}\label{eq:Z-action}
    \gamma(Z_\lambda):=\rho(\gamma\lambda)\bigl(\chi(\alpha(\gamma,\lambda))\bigr)Z_{\gamma\lambda}
  \end{equation}
  define a \(G\)-action on
  \[
    E:=F(Z_\lambda:\lambda\in G/H).
  \]

  Put \(W_0:=ex\).
  For every \(h\in H\), condition~\textup{(3)} of \(\Sigma\) and the equality \(h(x)=x\) give
  \[
    h(W_0)=h(e)h(x)=g^{q\ell(h)}ex=\chi(h)^qW_0.
  \]
  For each \(\lambda\in G/H\), put
  \[
    W_\lambda:=\rho(\lambda)(W_0).
  \]
  Let
  \[
    R:=F[Z_\lambda:\lambda\in G/H]
  \]
  and consider the ideal
  \[
    I:=\left\langle Z_\lambda^q-W_\lambda:\lambda\in G/H\right\rangle.
  \]

  \begin{claim}
    The ideal \(I\) is \(G\)-invariant.
  \end{claim}

  \begin{proof}
    For \(\gamma\in G\), \(\lambda\in G/H\), and \(h:=\alpha(\gamma,\lambda)\), Equation~\eqref{eq:Z-action} and the identity \(h(W_0)=\chi(h)^qW_0\) give
    \[
      \begin{aligned}
        \gamma(W_\lambda)
         & =\rho(\gamma\lambda)\bigl(\chi(h)\bigr)^qW_{\gamma\lambda},   \\
        \gamma(Z_\lambda)^q
         & =\rho(\gamma\lambda)\bigl(\chi(h)\bigr)^qZ_{\gamma\lambda}^q.
      \end{aligned}
    \]
    Hence
    \[
      \gamma(Z_\lambda^q-W_\lambda)=\rho(\gamma\lambda)\bigl(\chi(h)\bigr)^q\bigl(Z_{\gamma\lambda}^q-W_{\gamma\lambda}\bigr)\in I.
    \]
  \end{proof}

  \begin{claim}
    Both \(R/I\) and \((R/I)\otimes_K\overline K\) are integral domains.
  \end{claim}

  \begin{proof}
    Fix an algebraic closure \(\overline K\) of \(K\).
    For every \(\lambda\in G/H\),
    \[
      W_\lambda=\rho(\lambda)(e)\bigl(x+B_{\rho(\lambda)}(c)\bigr).
    \]
    If \(\lambda\neq\kappa\), then \(\rho(\lambda)H\neq\rho(\kappa)H\).
    By the equivalence above,
    \[
      \beta(\rho(\lambda))\neq\beta(\rho(\kappa)).
    \]
    Condition~\textup{(4)} of \(\Sigma\) therefore gives
    \[
      B_{\rho(\lambda)}(c)\neq B_{\rho(\kappa)}(c).
    \]
    Thus the \(W_\lambda\) have pairwise distinct linear prime divisors
    \[
      x+B_{\rho(\lambda)}(c)
    \]
    in \(\overline K[x]\).

    We first show that their classes are linearly independent over \(\mathbb F_q\) in
    \[
      \overline K(x)^\times/\overline K(x)^{\times q}.
    \]
    Let \(\lambda_1,\ldots,\lambda_k\) be distinct and suppose
    \[
      W_{\lambda_1}^{d_1}\cdots W_{\lambda_k}^{d_k}\in\overline K(x)^{\times q},\quad 0\leq d_i<q.
    \]
    Let \(v_i\) be the discrete valuation corresponding to
    \[
      x+B_{\rho(\lambda_i)}(c).
    \]
    Then
    \[
      v_i(W_{\lambda_j})=\delta_{ij},
    \]
    so the valuation of the displayed product is \(d_i\).
    Since valuations of \(q\)-th powers are divisible by \(q\), every \(d_i\) is zero.

    Now let \(U\subseteq G/H\) be finite and put
    \[
      C:=K(x)\otimes_K\overline K.
    \]
    The extension \(K(x)/K\) is regular, so \(C\) is an integral domain and embeds naturally in \(\overline K(x)\).
    Its fraction field is \(\overline K(x)\).
    Set
    \[
      C_U:=C[Z_\lambda:\lambda\in U]/\langle Z_\lambda^q-W_\lambda:\lambda\in U\rangle.
    \]
    Because the defining polynomials are monic, \(C_U\) is a free \(C\)-module with basis
    \[
      \prod_{\lambda\in U}Z_\lambda^{d_\lambda},\quad 0\leq d_\lambda<q.
    \]
    In particular it is torsion-free over \(C\), so localization gives an injection
    \[
      C_U\hookrightarrow\overline K(x)[Z_\lambda:\lambda\in U]/\langle Z_\lambda^q-W_\lambda:\lambda\in U\rangle.
    \]
    The valuation argument above says precisely that the classes of the \(W_\lambda\), \(\lambda\in U\), are independent in \(\overline K(x)^\times/\overline K(x)^{\times q}\).
    The field \(\overline K(x)\) contains the \(q\)-th roots of unity, and its characteristic is different from \(q\) by hypothesis.
    Kummer theory therefore shows that adjoining \(q\)-th roots of the \(W_\lambda\) gives an extension of degree \(q^{|U|}\); see \cite[Theorem~5.30]{Milne2022}.
    The ring on the right has \(\overline K(x)\)-dimension \(q^{|U|}\).
    Its natural map onto the field obtained by adjoining chosen roots is therefore an isomorphism.
    Hence \(C_U\) is an integral domain.

    If \(U\subseteq V\) are finite, the same monic-basis argument shows that \(C_V\) is a free \(C_U\)-module.
    In particular, the natural map \(C_U\to C_V\) is injective.
    The ring \((R/I)\otimes_K\overline K\) is therefore the directed union of the domains \(C_U\), and hence is an integral domain.
    Since \(\overline K/K\) is faithfully flat, the natural map
    \[
      R/I\longrightarrow (R/I)\otimes_K\overline K
    \]
    is injective, and \(R/I\) is an integral domain as well.
  \end{proof}

  Put
  \[
    \overline R:=R/I
  \]
  and
  \[
    L:=\Frac(\overline R).
  \]
  Since \(I\) is \(G\)-invariant, the \(G\)-action on \(R\) descends to \(\overline R\) and extends uniquely to a \(G\)-action on \(L\).
  Moreover,
  \[
    L\otimes_K\overline K\cong(\overline R\otimes_K\overline K)_{\overline R\setminus\{0\}}.
  \]
  Since \(\overline R\otimes_K\overline K\) is an integral domain and the image of every element of \(\overline R\setminus\{0\}\) is nonzero, the ring \(L\otimes_K\overline K\) is an integral domain.
  By the standard tensor-product criterion for regular extensions recalled above (see also \cite[Chapter~3]{FriedJarden2023}), \(L/K\) is regular as an extension of pure fields.
  Let \(y\in L\) be the image of \(Z_H\).
  Since \(\rho(H)=1\), one has
  \[
    y^q=W_H=W_0=ex.
  \]
  For every \(h\in H\), one has \(\alpha(h,H)=h\).
  Therefore
  \[
    h(y)=\chi(h)y=g^{\ell(h)}y.
  \]
  Since \(\tau\in H\) and \(\ell(\tau)=1\), it follows that
  \[
    \tau(y)=gy.
  \]
  Finally, \(x\) is transcendental over \(K\), and hence \(x\neq0\).
  Consequently,
  \[
    L\models\psi(x,y,c,e,g).
  \]
\end{proof}

The preceding construction also gives the following refinement.

\begin{proposition}\label{prop:regularRealizationTrivialD}
  Under the hypotheses of Proposition~\ref{prop:SigmaRealization}, let \(D\trianglelefteq\BS(m,n)\) satisfy
  \[
    D\subseteq\ker\beta\cap\ker\ell.
  \]
  If \(D\) acts trivially on \(K\), then the regular extension \(L/K\) may be chosen so that \(D\) acts trivially on \(L\).
\end{proposition}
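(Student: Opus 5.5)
We reuse the construction in the proof of Proposition~\ref{prop:SigmaRealization} without change and check that \(D\) acts trivially at each stage. Put \(G:=\BS(m,n)\) and \(H:=\ker\beta\); the construction proceeds in three stages, \(K\subseteq F=K(x)\subseteq E=F(Z_\lambda)\), followed by the quotient \(\overline R=R/I\) and its fraction field \(L\).

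\emph{First stage.} For \(d\in D\) we have \(d(x)=x+B_d(c)\). Since \(d\in\ker\beta\), we get \(\beta(d)=0=\beta(1)\), so condition~(4) of \(\Sigma\) gives \(B_d(c)=B_1(c)=0\). Hence \(d(x)=x\). As \(D\) acts trivially on \(K\), it acts trivially on \(F=K(x)\).

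\emph{Second stage.} Apply the ``moreover'' part of Lemma~\ref{lem:cocycleRealization} to the base field \(F\), the subgroup \(H\), and the cocycle \(\chi(h)=g^{\ell(h)}\). Its hypotheses hold: \(D\trianglelefteq G\) and \(D\subseteq H\) by assumption; \(\chi|_D=g^{0}=1\) because \(D\subseteq\ker\ell\); and \(D\) acts trivially on \(F\) by the first stage. The lemma gives that \(D\) acts trivially on \(E\), and therefore also on the polynomial ring \(R=F[Z_\lambda]\subseteq E\) with its induced action.

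\emph{Third stage.} The action on \(\overline R=R/I\) is the one descended from \(R\), so \(D\) acts trivially on \(\overline R\). The unique extension of the action to \(L=\Frac(\overline R)\) is then trivial on \(D\) as well. Regularity of \(L/K\) and the solution of \(\psi\) are unchanged, since the extension \(L\) is literally the same one.

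The only step needing care is the first one. Lemma~\ref{lem:cocycleRealization} handles the \(Z_\lambda\), but it presupposes triviality on \(F\), and that rests on \(B_d(c)=0\) for \(d\in\ker\beta\), which is supplied by condition~(4) of \(\Sigma\) together with \(B_1(c)=0\). The triviality of \(D\) on \(K\) is used only at that point. We also note that \(W_\lambda=\rho(\lambda)(ex)\) is not touched by \(D\) beyond what is already covered by \(D\) being trivial on \(F\), so the quotient stage introduces no further condition.
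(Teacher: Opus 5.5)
Your proof is correct and follows the paper's argument essentially step for step. Condition~(4) of \(\Sigma\) with \(\beta(d)=\beta(1)\) gives \(d(x)=x\), so \(D\) acts trivially on \(F=K(x)\). Since \(D\subseteq\ker\ell\), one has \(\chi|_D=1\), so the ``moreover'' clause of Lemma~\ref{lem:cocycleRealization} fixes every \(Z_\lambda\), and triviality then passes to \(R/I\) and its fraction field.
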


\begin{proof}
  Let \(H=\ker\beta\).
  For \(d\in D\), condition~\textup{(4)} of \(\Sigma\) gives
  \[
    B_d(c)=B_1(c)=0,
  \]
  so \(d(x)=x\).
  Thus \(D\) acts trivially on \(F=K(x)\).
  The cocycle used in Proposition~\ref{prop:SigmaRealization} is
  \[
    \chi(h)=g^{\ell(h)}\quad(h\in H).
  \]
  Since \(D\subseteq H\) and \(\ell(D)=0\), one has \(\chi|_D=1\).
  The ``Moreover'' part of Lemma~\ref{lem:cocycleRealization} therefore makes every variable \(Z_\lambda\) fixed by \(D\).
  Thus \(D\) acts trivially on \(R\), and hence also on \(\overline R=R/I\) and its fraction field.
\end{proof}

\section{The Baumslag--Solitar Obstruction}\label{sec:BS-obstruction}

\subsection{Obstruction to the realization problem}

The following proposition gives an obstruction to solving \(\psi(x,y,c,e,g)\) when the \(\sigma\)-orbit of \(x\) is periodic.

\begin{proposition}\label{prop:realizationObstruction}
  Let \(N\geq1\).
  Suppose that \(c,e,g,\zeta\in K\) satisfy
  \[
    K\models\theta_q(\zeta)\wedge eg\neq0\wedge B_{\sigma^N}(c)=0\wedge \sigma^N(e)=e\wedge\sigma^{nN}(g)\neq g.
  \]
  Then no \(\BS(m,n)\)-field extension \(L/K\) contains a solution of \(\psi(x,y,c,e,g)\).
\end{proposition}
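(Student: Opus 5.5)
The plan is to argue by contradiction. Suppose some \(\BS(m,n)\)-field extension \(L/K\) contains \(x,y\) with \(L\models\psi(x,y,c,e,g)\). The idea is to use the periodicity \(B_{\sigma^N}(c)=0\) to make \(\sigma^N\) act on \(y\) by a \(q\)-th root of unity. Then, applying the defining relation \(\tau\sigma^m\tau^{-1}=\sigma^n\) to \(y\), the root-of-unity factors cancel exactly because of the congruence \(wm\equiv n \pmod q\) built into \(\theta_q\). What survives is \(\sigma^{nN}(g)=g\), contradicting the hypothesis.

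First step: the orbit of \(x\) is periodic and \(y\) is nonzero. By the discussion preceding Definition~\ref{def:realizationProblem}, \(\sigma(x)=x+c\) and \(\tau(x)=x\) force \(\gamma(x)=x+B_\gamma(c)\) for all \(\gamma\). Hence \(\sigma^N(x)=x+B_{\sigma^N}(c)=x\). Since \(e\neq0\) and \(x\neq0\), we get \(y^q=ex\neq0\), so \(y\neq0\). Applying \(\sigma^N\) to \(y^q=ex\) and using \(\sigma^N(e)=e\) gives \(\sigma^N(y)^q=ex=y^q\). Therefore \(\sigma^N(y)/y\) is a \(q\)-th root of unity in \(L\).

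Second step: identify that root of unity. Because \(q\) is admissible, \(\operatorname{char}L=\operatorname{char}K\) is not \(q\). So the \(q\)-th roots of unity in \(L\) are exactly the powers of the primitive root \(\zeta\in K\), and \(\sigma^N(y)=\zeta^k y\) for some \(k\). By \(\theta_q(\zeta)\), \(\sigma\) fixes \(\zeta\). Iterating therefore gives \(\sigma^{jN}(y)=\zeta^{jk}y\) for all \(j\in\Z\), including negative \(j\), which matters if \(n<0\).

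Third step: evaluate both sides of \(\tau\sigma^{mN}=\sigma^{nN}\tau\) on \(y\). This identity follows from the relation by raising \(\tau\sigma^m\tau^{-1}=\sigma^n\) to the \(N\)-th power. On the left,
\[
\tau\sigma^{mN}(y)=\tau(\zeta^{mk})\,\tau(y)=\zeta^{wmk}\,g\,y.
\]
On the right,
\[
\sigma^{nN}(\tau y)=\sigma^{nN}(g)\,\sigma^{nN}(y)=\sigma^{nN}(g)\,\zeta^{nk}\,y.
\]
Since \(w=nm^{-1}\) in \((\Z/q\Z)^\times\), we have \(wm\equiv n\pmod q\), so \(\zeta^{wmk}=\zeta^{nk}\). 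Cancelling the nonzero factor \(\zeta^{nk}y\) yields \(g=\sigma^{nN}(g)\), which contradicts the hypothesis \(\sigma^{nN}(g)\neq g\).

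The main point needing care is the second step: justifying that \(\sigma^N(y)/y\) is a power of the given \(\zeta\). This is where admissibility (so that \(\operatorname{char}\neq q\)) and the primitivity of \(\zeta\) are used. Keeping the exponent bookkeeping modulo \(q\) consistent with the convention stated after Definition~\ref{def:admissible} also requires attention. The rest is direct computation.
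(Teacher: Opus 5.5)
Your proposal is correct and follows essentially the same route as the paper: the periodicity \(\sigma^N(x)=x\) makes \(\sigma^N(y)/y\) a \(q\)-th root of unity in \(\langle\zeta\rangle\), and evaluating \(\tau\sigma^{mN}=\sigma^{nN}\tau\) on \(y\) with \(wm\equiv n\pmod q\) forces \(\sigma^{nN}(g)=g\). You also justify \(y\neq0\) from \(ex\neq0\), a step the paper uses without comment.
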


\begin{proof}
  Suppose that \(x,y\in L\) satisfy \(\psi(x,y,c,e,g)\).
  Since \(B_{\sigma^N}(c)=0\), one has \(\sigma^N(x)=x\).
  Applying \(\sigma^N\) to \(y^q=ex\) and using \(\sigma^N(e)=e\) gives
  \[
    \sigma^N(y)^q=y^q.
  \]
  As \(y\neq0\), put
  \[
    \xi:=\frac{\sigma^N(y)}y\in\mu_q(L).
  \]
  Because \(\zeta\in K\) is a primitive \(q\)-th root of unity, every \(q\)-th root of unity in \(L\) already lies in \(K\).
  Hence \(\xi\in\mu_q(K)=\langle\zeta\rangle\); in particular \(\sigma(\xi)=\xi\) and \(\tau(\xi)=\xi^w\).
  Hence
  \[
    \sigma^{jN}(y)=\xi^j y\quad(j\in\Z).
  \]
  The Baumslag--Solitar relation gives
  \[
    \tau\sigma^{mN}=\sigma^{nN}\tau.
  \]
  Applying this to \(y\), the two sides are
  \[
    \tau\sigma^{mN}(y)=\xi^{wm}gy
  \]
  and
  \[
    \sigma^{nN}\tau(y)=\sigma^{nN}(g)\xi^n y.
  \]
  Since \(wm\equiv n\pmod q\), one has \(\xi^{wm}=\xi^n\), and cancellation yields
  \[
    \sigma^{nN}(g)=g,
  \]
  contrary to the hypothesis.
\end{proof}

\subsection{The obstruction theorem}

\begin{theorem}\label{th:BSQuotient}
  Let
  \[
    \varphi:\BS(m,n)\twoheadrightarrow Q
  \]
  be a surjective homomorphism such that
  \[
    \ker\varphi\subseteq\ker\beta\cap\ker\ell.
  \]
  Then there is no \(\aleph_0\)-saturated p.e.c. \(Q\)-field satisfying \(\theta_q\) relative to \(\varphi\).
\end{theorem}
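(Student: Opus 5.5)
Argue by contradiction. Suppose \(K\) is an \(\aleph_0\)-saturated p.e.c. \(Q\)-field satisfying \(\theta_q\) relative to \(\varphi\), and put \(D:=\ker\varphi\). By Lemma~\ref{lemma:Inflation}, \(K_\varphi\) is an \(\aleph_0\)-saturated \(\BS(m,n)\)-field on which \(D\) acts trivially and which contains \(\zeta\) with \(\theta_q(\zeta)\). It is existentially closed in every regular \(\BS(m,n)\)-field extension on which \(D\) acts trivially. Note \(\operatorname{char}K\neq q\), since \(q\) is admissible.

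\emph{Step 1: every realization of \(\Sigma\) in \(K\) solves \(\psi\) in \(K\).} If \(c,e,g\in K\) realize \(\Sigma(c,e,g)\), then Proposition~\ref{prop:SigmaRealization} and Proposition~\ref{prop:regularRealizationTrivialD}, applicable since \(D\subseteq\ker\beta\cap\ker\ell\) is normal, give a regular extension \(L/K\) with \(D\) acting trivially on \(L\) and \(L\models\exists x,y\,\psi(x,y,c,e,g)\). By Lemma~\ref{lemma:Inflation}(ii) the same holds in \(K\).

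\emph{Step 2: a realization of \(\Sigma\) in \(K\) for which \(\psi\) is unsolvable.} This follows by saturation once we show that
\[
  \Sigma(c,e,g)\cup\{\neg\exists x,y\,\psi(x,y,c,e,g)\}
\]
is finitely satisfiable in \(K\); this contradicts Step~1. Fix a finite fragment, involving finitely many \(\gamma,\delta\) with their values \(\beta(\gamma),\beta(\delta)\). By Lemma~\ref{lem:reduction}, choose a prime \(N\) coprime to \(mnq\) with \(\operatorname{red}_N\) injective on these finitely many \(\beta\)-values. We realize the fragment together with the quantifier-free hypotheses of Proposition~\ref{prop:realizationObstruction} for this \(N\), namely
\[
  eg\neq0,\quad B_{\sigma^N}(c)=0,\quad \sigma^N(e)=e,\quad \sigma^{nN}(g)\neq g.
\]
By that proposition these hypotheses imply \(\neg\exists x,y\,\psi\) in \(K\). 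Since the whole condition is quantifier-free over the parameter \(\zeta\), it suffices by existential closedness to realize it in some regular extension of \(K\) on which \(D\) acts trivially.

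Build that extension with Lemma~\ref{lem:cocycleRealization}, using purely transcendental extensions; these are regular, and \(D\) stays trivial by the ``Moreover'' clause because all cocycles used vanish on \(D\). Let \(H_N:=\ker(\operatorname{red}_N\circ\beta)\), a subgroup since \(\operatorname{red}_N\circ\beta\) is a cocycle into \(\Z/N\Z\). Since \(\ker\beta\subseteq H_N\), we have \(D\subseteq H_N\) and \(\tau\in H_N\).

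Let \(x'=z_{H_N}\) for the trivial cocycle and set \(c:=\sigma(x')-x'\). Then
\[
  B_\gamma(c)=\gamma(x')-x'=z_{\gamma H_N}-z_{H_N}.
\]
Hence \(B_\gamma(c)=B_\delta(c)\) if and only if \(\operatorname{red}_N\beta(\gamma)=\operatorname{red}_N\beta(\delta)\). By the choice of \(N\), this gives the required equalities and inequalities of the fragment. Conditions~(1) of \(\Sigma\) and \(B_{\sigma^N}(c)=0\) hold because \(x'\) is an actual element, \(\tau(x')=x'\), and \(\beta(\sigma^N)=N\). Next let \(g:=z_{\ker\beta}\) in a further induced extension with trivial cocycle. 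Then \(\ker\beta\) fixes \(g\), while \(\sigma^{nN}(g)\neq g\) because \(\beta(\sigma^{nN})=nN\neq0\).

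\emph{Main obstacle: constructing \(e\).} We need \(e\neq0\) with \(\sigma^N(e)=e\) and \(\gamma(e)=g^{q\ell(\gamma)}e\) for \(\gamma\in\ker\beta\). The plan is to take \(e=z_{H''}\) for \(H'':=\langle\ker\beta,\sigma^N\rangle\) and the cocycle
\[
  \chi''(h):=\prod(\text{translates of }g)^{q\ell(\cdot)}.
\]
This cocycle should be defined on \(\ker\beta\), extended by \(\chi''(\sigma^N)=1\), and trivial on \(D\). The difficulty is checking that it is well defined, because \(\sigma^N\) does not fix \(g\). The first fix to try is to replace \(g\) by a norm-type product \(\prod_{j\bmod N}\sigma^{jN}\)-twisted element. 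Failing that, use the Fermat trick from the introduction: pass to the stable letter \(\tau^{q-1}\), so that \(w^{q-1}=1\) and the root of unity \(\zeta\) of \(\theta_q\), fixed by everything, can be used to adjust the cocycle. Compatibility with \(\theta_q\) is exactly what the obstruction proposition needs. Once \(e\) exists, compactness and \(\aleph_0\)-saturation give a realization of \(\Sigma\) in \(K\) for which \(\psi\) is unsolvable, contradicting Step~1.
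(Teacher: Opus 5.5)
Your Step 1 and your construction of \(c\) are fine; your \(H_N\)-induced module is the paper's permutation module \(X_j\), \(j\in\Z/N\Z\), in another form. The gap is at \(e\), and it is caused by your choice of \(g\), not by a missing trick for \(e\). With \(g:=z_{\ker\beta}\) a freely permuted variable, no suitable \(e\) exists in any \(\BS(m,n)\)-field extension.

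To see this, suppose \(\tau(g)=g\), \(\tau(e)=g^qe\) and \(\sigma^N(e)=e\). The first two are the instance \(\gamma=\tau\in\ker\beta\) of condition~(3), which an arbitrary finite fragment may contain. Then \(\tau^{-1}(e)=g^{-q}e\), and applying \(\tau\sigma^{mN}\tau^{-1}=\sigma^{nN}\) to \(e\) gives \(\tau\sigma^{mN}(g)^{-q}g^qe=e\). Using \(\tau\sigma^{mN}=\sigma^{nN}\tau\) and \(\tau(g)=g\), this says \(\sigma^{nN}(g)^q=g^q\). For your \(g\), \(\sigma^{nN}(g)=z_{\sigma^{nN}\ker\beta}\) is a different indeterminate, since \(\beta(\sigma^{nN})=nN\neq0\), so this fails.

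The same computation rules out your proposed fixes:
\begin{enumerate}
  \item No cocycle \(\chi''\) on \(\langle\ker\beta,\sigma^N\rangle\) with \(\chi''(\tau)=g^q\) and \(\chi''(\sigma^N)=1\) can exist.
  \item A norm-type replacement making \(g\) invariant under \(\sigma^N\) would destroy \(\sigma^{nN}(g)\neq g\).
  \item The Fermat trick is unavailable. It belongs to the proof of Theorem~\ref{th:overgroupBSmn}, whereas here \(\varphi\) and the hypothesis \(\theta_q\) are fixed.
\end{enumerate}

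The computation shows what is needed: \(\sigma^{nN}(g)/g\) must be a nontrivial \(q\)-th root of unity, necessarily a power of \(\zeta\). So \(\theta_q\) must enter the construction of \(g\), and your plan never uses it there.

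The paper takes \(g\) transcendental with \(\gamma(g)=\chi(\gamma)g\), via Corollary~\ref{cor:globalCocycleRealization}, for the cocycle \(\chi(\gamma)=\zeta^{\operatorname{red}_q(\beta(\gamma))}\) of Lemma~\ref{lem:oneCocycle}. This \(\chi\) is a cocycle precisely because \(\sigma(\zeta)=\zeta\) and \(\tau(\zeta)=\zeta^w\). Then \(\ker\beta\) fixes \(g\), and \(g^q\) is fixed by all of \(\BS(m,n)\). Hence \(\eta(\gamma)=g^{q\ell(\gamma)}\) is a cocycle, vanishing on \(D\subseteq\ker\ell\), and a second application yields \(e\) with \(\gamma(e)=g^{q\ell(\gamma)}e\) for all \(\gamma\).

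In particular \(\sigma(e)=e\), so \(\sigma^N(e)=e\) for every \(N\), while \(\sigma^{nN}(g)=\zeta^{nN}g\neq g\) because \(q\nmid nN\). This is also why \(g\) and \(e\) can be built before \(N\) is chosen. With this replacement, the rest of your argument goes through.
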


\begin{proof}
  Put \(D:=\ker\varphi\), and suppose toward a contradiction that \(K\) is such a \(Q\)-field.
  Inflate its action along \(\varphi\), and continue to denote the resulting \(\BS(m,n)\)-field by \(K\).
  By Lemma~\ref{lemma:Inflation}, it is \(\aleph_0\)-saturated, and every existential formula realized in a regular \(\BS(m,n)\)-field extension on which \(D\) acts trivially is already realized in \(K\).
  Fix a primitive \(q\)-th root of unity \(\zeta\in K\) satisfying \(\theta_q(\zeta)\).
  Let
  \[
    p(c,e,g):=\Sigma(c,e,g)\cup\{\neg\exists x\exists y\,\psi(x,y,c,e,g)\}.
  \]
  We show that \(p\) is finitely satisfiable in \(K\).

  Let \(p_0\subseteq_{\mathrm{fin}}p\).
  Enlarging \(p_0\) if necessary, we may assume that \(eg\neq0\) belongs to \(p_0\).
  First construct \(g\) and \(e\), independently of the finite periodic orbit.
  Let \(\chi\) be the \(1\)-cocycle from Lemma~\ref{lem:oneCocycle}.
  Since \(D\subseteq\ker\beta\), one has \(\chi|_D=1\).
  Applying Corollary~\ref{cor:globalCocycleRealization}, we obtain a purely transcendental \(\BS(m,n)\)-field extension \(K_1/K\) containing \(g\neq0\) such that \(D\) acts trivially on \(K_1\) and
  \[
    \gamma(g)=\chi(\gamma)g\quad(\gamma\in\BS(m,n)).
  \]
  In particular,
  \[
    \sigma(g)=\zeta g,\quad \tau(g)=g,\quad \gamma(g)=g\quad(\gamma\in\ker\beta).
  \]
  Since \(\chi(\gamma)^q=1\), the element \(g^q\) is fixed by all of \(\BS(m,n)\).
  Hence
  \[
    \eta:\BS(m,n)\longrightarrow K_1^\times,\quad \eta(\gamma):=g^{q\ell(\gamma)}
  \]
  is a \(1\)-cocycle.
  Moreover, \(\eta|_D=1\) because \(D\subseteq\ker\ell\).
  A second application of Corollary~\ref{cor:globalCocycleRealization} gives a purely transcendental extension \(K_2/K_1\) containing \(e\neq0\) such that \(D\) acts trivially on \(K_2\) and
  \[
    \gamma(e)=g^{q\ell(\gamma)}e\quad(\gamma\in\BS(m,n)).
  \]
  Thus
  \[
    \sigma(e)=e,\quad \tau(e)=g^q e,
  \]
  and condition~\textup{(3)} of \(\Sigma\) holds in full.

  Only now choose the length of the finite periodic orbit.
  Let \(S\subseteq\operatorname{Im}\beta\) be the finite set of \(\beta\)-values appearing in the finitely many instances of condition~\textup{(4)} of \(\Sigma\) occurring in \(p_0\).
  By Lemma~\ref{lem:reduction}, choose a prime \(N\neq q\), coprime to \(mn\), such that \(\operatorname{red}_N\) is injective on \(S\).
  Since \(q\nmid nN\), we have
  \[
    \sigma^{nN}(g)=\zeta^{nN}g\neq g,
  \]
  while \(\sigma^N(e)=e\).

  It remains to construct \(c\).
  Put
  \[
    w_N:=nm^{-1}\in(\Z/N\Z)^\times.
  \]
  Adjoin algebraically independent variables
  \[
    X_j\quad(j\in\Z/N\Z)
  \]
  and define
  \[
    \sigma(X_j)=X_{j+1},\quad \tau(X_j)=X_{w_Nj}.
  \]
  Since \(w_Nm\equiv n\pmod N\), these permutations satisfy the Baumslag--Solitar relation.
  More precisely, induction from the 1-cocycle identity for \(\beta\) gives
  \[
    \gamma(X_j)=X_{\operatorname{red}_N(\beta(\gamma))+w_N^{\ell(\gamma)}j}\quad(\gamma\in\BS(m,n)).
  \]
  Hence every element of \(D\subseteq\ker\beta\cap\ker\ell\) fixes every \(X_j\).
  Put
  \[
    X:=X_0,\quad c:=\sigma(X)-X=X_1-X_0.
  \]
  Then
  \[
    B_{\sigma^j}(c)=X_{\operatorname{red}_N(j)}-X_0\quad(j\in\Z),\quad B_{\sigma^N}(c)=0,\quad \tau(B_{\sigma^m}(c))=B_{\sigma^n}(c),
  \]
  and for every \(\gamma\in\BS(m,n)\),
  \[
    B_\gamma(c)=X_{\operatorname{red}_N(\beta(\gamma))}-X_0.
  \]
  By the choice of \(N\), all finitely many equalities and inequalities from condition~\textup{(4)} of \(\Sigma\) occurring in \(p_0\) are satisfied.

  Let \(K'/K\) be obtained by adjoining \(g\), then \(e\), and then the finite family \((X_j)_{j\in\Z/N\Z}\).
  This is a regular \(\BS(m,n)\)-field extension on which \(D\) acts trivially.
  In \(K'\), the tuple \((c,e,g)\) satisfies every formula in \(p_0\cap\Sigma(c,e,g)\), together with
  \[
    B_{\sigma^N}(c)=0,\quad \sigma^N(e)=e,\quad \sigma^{nN}(g)\neq g.
  \]
  Since \(D\) acts trivially on \(K'\), its action factors through \(Q\), making \(K'/K\) a regular \(Q\)-field extension.
  As \(K\) is p.e.c. as a \(Q\)-field, the same finite conjunction is realized by some \((c_0,e_0,g_0)\in K\).
  Proposition~\ref{prop:realizationObstruction} then shows that no \(\BS(m,n)\)-field extension of \(K\) can realize \(\psi(x,y,c_0,e_0,g_0)\).
  Hence \(p_0\) is realized in \(K\).

  By the Compactness Theorem, the finite satisfiability just proved shows that \(p\) is consistent with \(\operatorname{Th}(K)\).
  Extend \(p\) to a complete type over the empty set.
  By \(\aleph_0\)-saturation, some \((c,e,g)\in K\) realizes \(p\).
  Then \(\Sigma(c,e,g)\) holds in \(K\), while
  \[
    K\models\neg\exists x\exists y\,\psi(x,y,c,e,g).
  \]
  By Proposition~\ref{prop:regularRealizationTrivialD}, however, there is a regular \(\BS(m,n)\)-field extension \(L/K\), still with trivial \(D\)-action, in which \(\psi(x,y,c,e,g)\) has a solution.
  Since \(D\) acts trivially on \(L\), its action factors through \(Q\), making \(L/K\) a regular \(Q\)-field extension.
  Since \(K\) is p.e.c. as a \(Q\)-field, the formula already has a solution in \(K\), a contradiction.
\end{proof}

\begin{corollary}\label{cor:BSObstruction}
  Let \(K\) be an \(\aleph_0\)-saturated p.e.c. \(\BS(m,n)\)-field and let \(q\) be admissible for \(\BS(m,n)\) over \(K\).
  Then \(K\) does not satisfy \(\theta_q\).
\end{corollary}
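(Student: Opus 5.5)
This is the special case of Theorem~\ref{th:BSQuotient} in which the quotient map is the identity. Take \(Q:=\BS(m,n)\) and \(\varphi:=\operatorname{id}_{\BS(m,n)}\). Then \(\ker\varphi=\{1\}\), and the kernel hypothesis \(\ker\varphi\subseteq\ker\beta\cap\ker\ell\) holds because both \(\ker\beta\) and \(\ker\ell\) contain the identity. Here \(\ell\) is a homomorphism and \(\beta(1)=0\) by the cocycle identity \eqref{eq:beta-cocycle}.

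It remains to match the hypotheses. Definition~\ref{def:cyclotomic-condition} says that a \(Q\)-field satisfies \(\theta_q\) relative to \(\pi\) when \(q\) is admissible over its underlying field and it contains a primitive \(q\)-th root of unity satisfying \(\theta_q\), with \(\sigma,\tau\) read through \(\pi\). With \(\pi=\operatorname{id}\), this is exactly the statement that the \(\BS(m,n)\)-field \(K\) satisfies \(\theta_q\) in the absolute sense, given that \(q\) is admissible, which the corollary assumes. Likewise, \(\aleph_0\)-saturation and p.e.c. for \(K\) as a \(Q\)-field are literally the same properties as for \(K\) as a \(\BS(m,n)\)-field. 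Lemma~\ref{lemma:Inflation} with trivial kernel confirms this, though it is not even needed.

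So the argument runs as follows. Suppose \(K\) satisfied \(\theta_q\). Then \(K\) would be an \(\aleph_0\)-saturated p.e.c. \(Q\)-field satisfying \(\theta_q\) relative to \(\varphi\), which Theorem~\ref{th:BSQuotient} rules out. There is no genuine obstacle. The only point to check is that the prime \(q\) in Theorem~\ref{th:BSQuotient} is the prime \(q\) fixed in Sections~\ref{sec:realization-construction}--\ref{sec:BS-obstruction}, and that this prime was arbitrary. So we apply the theorem with \(q\) taken to be the given admissible prime.
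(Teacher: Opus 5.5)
Your proposal is correct and is exactly the paper's argument: the paper's proof is the one line "apply Theorem~\ref{th:BSQuotient} to the identity map of \(\BS(m,n)\)", whose trivial kernel lies in \(\ker\beta\cap\ker\ell\). Your further checks supply the routine details that line leaves implicit: that \(\theta_q\) relative to the identity is the absolute condition, and that the fixed prime \(q\) may be taken to be the given admissible one.
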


\begin{proof}
  Apply Theorem~\ref{th:BSQuotient} to the identity map of \(\BS(m,n)\).
\end{proof}

\section{Groups Containing a Baumslag--Solitar Group}\label{sec:overgroups}

Let
\[
  \BS(m,n)=\langle\sigma,\tau\rangle
\]
be an embedded Baumslag--Solitar subgroup of a group \(G\).

\begin{lemma}\label{lem:iteratedRelation}
  For every integer \(\nu\geq1\), put
  \[
    H_\nu:=\langle\sigma,\tau^\nu\rangle\leq\BS(m,n).
  \]
  Then
  \[
    \tau^\nu\sigma^{m^\nu}=\sigma^{n^\nu}\tau^\nu,
  \]
  and hence there is a surjective homomorphism
  \[
    \varphi_\nu:\BS(m^\nu,n^\nu)\twoheadrightarrow H_\nu.
  \]
  If \(\sigma_\nu,\tau_\nu\) denote the standard generators of the source, then
  \[
    \varphi_\nu(\sigma_\nu)=\sigma,\quad \varphi_\nu(\tau_\nu)=\tau^\nu.
  \]
  Let \(\beta_\nu\) be the intrinsic 1-cocycle and \(\ell_\nu\) the \(\tau_\nu\)-exponent-sum homomorphism of \(\BS(m^\nu,n^\nu)\).
  Identifying
  \[
    \Z\left[\frac1{m^\nu n^\nu}\right]=\Z\left[\frac1{mn}\right],
  \]
  one has
  \[
    \beta\circ\varphi_\nu=\beta_\nu,\quad \ell\circ\varphi_\nu=\nu\ell_\nu.
  \]
  Consequently,
  \[
    \ker\varphi_\nu\subseteq\ker\beta_\nu\cap\ker\ell_\nu.
  \]
\end{lemma}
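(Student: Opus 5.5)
The proof is a direct computation in \(\BS(m,n)\), followed by a check of the two invariants on generators.

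First, prove the iterated relation \(\tau^k\sigma^{m^k}\tau^{-k}=\sigma^{n^k}\) by induction on \(k\geq1\). The case \(k=1\) is the defining relation. Raising it to the power \(m^{k}\) gives \(\tau\sigma^{m^{k+1}}\tau^{-1}=\sigma^{nm^{k}}\). Now write \(\sigma^{nm^k}=(\sigma^{m^k})^n\) and conjugate by \(\tau^k\); the inductive hypothesis gives
\[
  \tau^{k+1}\sigma^{m^{k+1}}\tau^{-(k+1)}=\tau^k\bigl(\sigma^{m^k}\bigr)^n\tau^{-k}=\sigma^{n^{k+1}}.
\]
Taking \(k=\nu\) shows that the assignment \(\sigma_\nu\mapsto\sigma\), \(\tau_\nu\mapsto\tau^\nu\) respects the single defining relation of \(\BS(m^\nu,n^\nu)\). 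By von Dyck's theorem it therefore defines a homomorphism \(\varphi_\nu\), and \(\varphi_\nu\) is onto \(H_\nu\) because its image contains the generators \(\sigma,\tau^\nu\) of \(H_\nu\).

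Next, compare the invariants. The identity \(\ell\circ\varphi_\nu=\nu\ell_\nu\) is an equality of homomorphisms to \(\Z\), so it suffices to check it on generators: \(\ell(\sigma)=0=\nu\ell_\nu(\sigma_\nu)\) and \(\ell(\tau^\nu)=\nu=\nu\ell_\nu(\tau_\nu)\). For \(\beta\), note that the module structure on the target of \(\beta_\nu\) is multiplication by \((n^\nu/m^\nu)^{\ell_\nu}=r^{\nu\ell_\nu}\). Hence \(\beta\circ\varphi_\nu\) is a \(1\)-cocycle for exactly this module structure, since for all \(\gamma,\delta\),
\[
  \beta(\varphi_\nu(\gamma\delta))=\beta(\varphi_\nu(\gamma))+r^{\ell(\varphi_\nu(\gamma))}\beta(\varphi_\nu(\delta))
\]
and \(r^{\ell\circ\varphi_\nu}=r^{\nu\ell_\nu}\). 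The base ring is also the same: \(\Z[1/(m^\nu n^\nu)]=\Z[1/(mn)]\), since both localize at the same primes.

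On generators, \(\beta(\sigma)=1=\beta_\nu(\sigma_\nu)\). For \(\tau^\nu\), the cocycle identity and \(\beta(\tau)=0\) give \(\beta(\tau^\nu)=\sum_{i<\nu}r^{i}\beta(\tau)=0=\beta_\nu(\tau_\nu)\). By the uniqueness statement of Proposition~\ref{prop:beta}, applied to \(\BS(m^\nu,n^\nu)\), we get \(\beta\circ\varphi_\nu=\beta_\nu\).

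Finally, if \(\varphi_\nu(\gamma)=1\), then \(\beta_\nu(\gamma)=\beta(1)=0\) and \(\nu\ell_\nu(\gamma)=0\). Since \(\nu\geq1\), this forces \(\ell_\nu(\gamma)=0\), which gives the kernel inclusion.

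The only delicate point is the inductive conjugation in the first step, where the exponents must be tracked carefully. Everything else is generator checking combined with uniqueness of cocycles.
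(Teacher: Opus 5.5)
Your proof is correct and follows the paper's argument. You derive the iterated relation \(\tau^\nu\sigma^{m^\nu}\tau^{-\nu}=\sigma^{n^\nu}\), check \(\beta\circ\varphi_\nu\) and \(\ell\circ\varphi_\nu\) against \(\beta_\nu\) and \(\nu\ell_\nu\) on generators, and use \(\nu\geq1\) (torsion-freeness of \(\Z\)) for the kernel. Your explicit observation that \(\beta\circ\varphi_\nu\) is a cocycle for the same module structure \(r^{\nu\ell_\nu}\) as \(\beta_\nu\) is what justifies the paper's terser generator check.
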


\begin{proof}
  For every integer \(k\), the defining relation implies
  \[
    \tau\sigma^{mk}\tau^{-1}=\sigma^{nk}.
  \]
  Iterating this identity \(\nu\) times gives
  \[
    \tau^\nu\sigma^{m^\nu}\tau^{-\nu}=\sigma^{n^\nu},
  \]
  which yields the stated surjection.
  The two displayed identities are checked on \(\sigma_\nu,\tau_\nu\): \(\beta\circ\varphi_\nu\) and \(\beta_\nu\) both take the values \(1,0\), whereas \(\ell\circ\varphi_\nu\) and \(\nu\ell_\nu\) both take the values \(0,\nu\).
  If \(\gamma\in\ker\varphi_\nu\), the first identity gives \(\beta_\nu(\gamma)=0\), while the second gives \(\nu\ell_\nu(\gamma)=0\).
  Since \(\Z\) is torsion-free, one also has \(\ell_\nu(\gamma)=0\).
\end{proof}

\begin{theorem}\label{th:overgroupBSmn}
  If \(\BS(m,n)\leq G\) for some non-zero integers \(m,n\), then \(G\TCF\) does not exist.
\end{theorem}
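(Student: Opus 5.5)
Suppose toward a contradiction that \(G\TCF\) exists, and fix an \(\aleph_0\)-saturated model \(K\) of it; such a model exists by the usual elementary-extension argument. Since \(G\TCF\) is model complete and companions \(G\TF\), its models are existentially closed \(G\)-fields, so \(K\) is an \(\aleph_0\)-saturated existentially closed \(G\)-field. By Corollary~\ref{cor:Descent}, restricting to any subgroup \(H_\nu=\langle\sigma,\tau^\nu\rangle\leq\BS(m,n)\leq G\) gives a p.e.c. \(H_\nu\)-field. It is still \(\aleph_0\)-saturated, because the \(L_{H_\nu}\)-reduct of an \(\aleph_0\)-saturated \(L_G\)-structure is \(\aleph_0\)-saturated: types in the smaller language are types in the larger one. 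The aim is to apply Theorem~\ref{th:BSQuotient} to \(\varphi_\nu:\BS(m^\nu,n^\nu)\twoheadrightarrow H_\nu\) from Lemma~\ref{lem:iteratedRelation}. That lemma supplies exactly the hypothesis \(\ker\varphi_\nu\subseteq\ker\beta_\nu\cap\ker\ell_\nu\), so it remains to pick \(q\) and \(\nu\) so that \(K\) satisfies \(\theta_q\) relative to \(\varphi_\nu\).

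First choose a prime \(q\) coprime to \(mn\) and to the characteristic exponent of \(K\); infinitely many primes qualify. Then \(q\) is also admissible for \(\BS(m^\nu,n^\nu)\). Next, \(K\) contains a primitive \(q\)-th root of unity \(\zeta\) fixed by all of \(G\). To see this, consider the extension \(K(\mu_q)\) with \(G\) acting trivially on the new roots. Equivalently, take \(K\otimes_{\mathbb{F}}\mathbb{F}(\mu_q)\) with \(\mathbb{F}\) the prime field, where \(G\) acts on the left factor. Then \(\exists z\,(z^q=1\wedge z\neq1\wedge\bigwedge_{i}\sigma_{g_i}(z)=z)\) holds there for any finitely many \(g_i\), and existential closedness pulls this down to \(K\).

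The root of unity must be fixed by all of \(G\), not just by the two generators. Since \(\mu_q(K)\) is finite, it suffices to find a single \(\zeta\) fixed by \(\sigma\) and \(\tau\). The quotient of \(\mathbb{F}(\mu_q)\otimes K\) by a maximal ideal is a field, but the \(G\)-action may not descend to it. I would sidestep this by first arguing directly that \(\mu_q\subseteq K\): an existentially closed field is algebraically closed as a pure field, because adjoining roots via Lemma~\ref{lem:inducedRegularExtension}-type inductions is awkward for nonregular extensions. Concretely, I would take \(\overline K\) with some extension of the \(G\)-action. That action exists, since automorphisms extend to \(\overline K\) but need not form an action in general, which is the delicate point. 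Then \(G\) acts on \(\mu_q(K)\cong\Z/q\Z\) through \((\Z/q\Z)^\times\). Set \(\nu=q-1\). Then \(\sigma\) acts on \(\zeta\) by some \(a\), so \(\sigma^{q-1}\) acts trivially; but \(\sigma\) itself enters \(\theta_q\), so I instead rely on any \(\zeta\) and compute.

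The cleanest route is the following. Let \(\sigma(\zeta)=\zeta^a\) and \(\tau(\zeta)=\zeta^b\). The relation \(\tau\sigma^m\tau^{-1}=\sigma^n\) acting on the cyclic group forces \(a^m=a^n\). Replacing \(\tau\) by \(\tau^{q-1}\) makes \(\tau^\nu\) act trivially by Fermat's little theorem. The cyclotomic exponent required by \(\theta_q\) for \(\BS(m^\nu,n^\nu)\) is \(w_\nu=n^\nu m^{-\nu}\equiv1\pmod q\), so \(\tau^\nu(\zeta)=\zeta=\zeta^{w_\nu}\). The remaining condition \(\sigma(\zeta)=\zeta\) is the main obstacle. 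I would secure it by existential closedness as above: adjoining the roots of unity with trivial \(G\)-action on \(\mathbb{F}(\mu_q)\) yields a genuine \(G\)-ring, and a domain quotient exists once \(K\) is shown to be relatively algebraically closed in it, which holds because an e.c. \(G\)-field contains \(\mu_q\). With \(\theta_q\) relative to \(\varphi_{q-1}\) established, Theorem~\ref{th:BSQuotient} gives the contradiction.
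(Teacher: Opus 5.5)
There is a genuine gap, at exactly the step you flag as the main obstacle. Everything else matches the paper's proof: descent via Corollary~\ref{cor:Descent}, \(\aleph_0\)-saturation of the reduct, Lemma~\ref{lem:iteratedRelation} with \(\nu=q-1\), and \(w_\nu\equiv1\pmod q\). The failing step is producing a primitive \(q\)-th root of unity \(\zeta\in K\) with \(\sigma(\zeta)=\zeta\). Because you take an \emph{arbitrary} \(\aleph_0\)-saturated model \(K\), existential closedness cannot supply it. Suppose \(\mu_q\subseteq K\) and \(\sigma(\zeta)=\zeta^a\) with \(a\not\equiv1\pmod q\). Then every \(G\)-field extension \(L/K\) has \(\mu_q(L)=\mu_q(K)\) with the same action. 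So \(\exists z\,(z^q=1\wedge z\neq1\wedge\sigma(z)=z)\) holds in no extension, and existential closedness gives nothing. Equivalently, \(K\otimes_{\mathbb F}\mathbb F(\mu_q)\) is a product of copies of \(K\), indexed by embeddings \(\iota:\mathbb F(\mu_q)\to K\), which \(G\) permutes via \(\iota\mapsto g\circ\iota\). A \(G\)-stable domain quotient therefore exists only if \(G\) already fixes \(\mu_q(K)\) pointwise, so your last paragraph is circular. Nothing about e.c.\ fields excludes this situation: there are models of \(\ACFA\) in which \(\sigma\) inverts a primitive cube root of unity. Your auxiliary claim that an e.c.\ \(G\)-field is algebraically closed is unproved and false for general \(G\) (e.g.\ \(G=C_2\)). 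You also give no valid argument that \(\mu_q\subseteq K\) at all.

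The missing idea is to \emph{choose} \(K\), using the other half of the companion property: every \(G\)-field embeds in a model of \(G\TCF\). Give \(\Q(\zeta)\) the trivial \(G\)-action, embed it in an e.c.\ \(G\)-field, and take an \(\aleph_0\)-saturated elementary extension \(K\). Then \(\zeta\in K\) is fixed by all of \(G\) by construction. Also \(\operatorname{char}K=0\), so admissibility only needs \(q\nmid mn\), and \(\theta_q\) relative to \(\varphi_{q-1}\) holds. With that choice your argument finishes exactly as in the paper. Alternatively, your Fermat trick can be applied to \(\sigma\) as well. The subgroup \(\langle\sigma^{q-1},\tau^{q-1}\rangle\) is again a quotient of \(\BS(m^{q-1},n^{q-1})\), and its kernel lies in \(\ker\beta_\nu\cap\ker\ell_\nu\) because \(\beta\circ\varphi=(q-1)\beta_\nu\) and \(\Z[1/(mn)]\) is torsion-free. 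This removes the need for \(G\) to fix \(\zeta\). You would still need \(\mu_q\subseteq K\), though, and that is again most easily secured by choosing \(K\supseteq\Q(\zeta)\).
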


\begin{proof}
  Suppose toward a contradiction that \(G\TCF\) exists.
  Choose a prime \(q\nmid mn\), and put
  \[
    \nu:=q-1.
  \]
  By Lemma~\ref{lem:iteratedRelation}, there is a surjective homomorphism
  \[
    \varphi_\nu:\BS(m^\nu,n^\nu)\twoheadrightarrow H_\nu:=\langle\sigma,\tau^\nu\rangle
  \]
  whose kernel is contained in \(\ker\beta_\nu\cap\ker\ell_\nu\).
  The cyclotomic exponent of the source is
  \[
    w_\nu\equiv n^\nu(m^\nu)^{-1}=\left(\frac nm\right)^\nu\equiv1\pmod q
  \]
  by Fermat's little theorem.

  Let \(\zeta\) be a primitive \(q\)-th root of unity and give
  \[
    K_0:=\Q(\zeta)
  \]
  the trivial \(G\)-action.
  Under the assumed existence of \(G\TCF\), the \(G\)-field \(K_0\) embeds into an existentially closed \(G\)-field; take a sufficiently saturated elementary extension \(K\).
  Then \(K\) is \(\aleph_0\)-saturated and p.e.c., and every element of \(G\) fixes \(\zeta\).

  Restrict the action to \(H_\nu\).
  By Corollary~\ref{cor:Descent}, this reduct is p.e.c.
  It remains \(\aleph_0\)-saturated: a type over a finite parameter set in the \(H_\nu\)-field language is a partial type in the \(G\)-field language, and every finite subset is realized in \(K\).
  Relative to \(\varphi_\nu\), it satisfies \(\theta_q\): \(\sigma_\nu\) acts as \(\sigma\) and \(\tau_\nu\) as \(\tau^\nu\), both fixing \(\zeta\), while the required cyclotomic exponent is \(w_\nu=1\).
  Theorem~\ref{th:BSQuotient} gives a contradiction.
\end{proof}

\begin{corollary}\label{cor:noBSTCF}
  For every non-zero \(m,n\), \(\BS(m,n)\TCF\) does not exist.
\end{corollary}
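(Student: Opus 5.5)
The plan is to apply Theorem~\ref{th:overgroupBSmn} with \(G:=\BS(m,n)\), taking the embedding \(\BS(m,n)\leq G\) to be the identity. The hypothesis \(mn\neq0\) is exactly the hypothesis of that theorem, so the corollary follows immediately. For completeness, I would also record the direct route, since it shows that nothing beyond Section~\ref{sec:BS-obstruction} is needed.

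Suppose \(\BS(m,n)\TCF\) exists. Choose a prime \(q\nmid mn\) with \(q\neq2\), and set \(\nu:=q-1\). Give \(K_0:=\Q(\zeta)\) the trivial \(\BS(m,n)\)-action, where \(\zeta\) is a primitive \(q\)-th root of unity. By the existence of the model companion, \(K_0\) embeds into an existentially closed \(\BS(m,n)\)-field. Passing to an \(\aleph_0\)-saturated elementary extension \(K\), we obtain an \(\aleph_0\)-saturated existentially closed \(\BS(m,n)\)-field, since elementary extensions of models of the model companion remain models. Such a \(K\) is p.e.c., and every group element fixes \(\zeta\). Since \(\operatorname{char}K=0\), the prime \(q\) is admissible.

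Next, restrict the action to \(H_\nu=\langle\sigma,\tau^\nu\rangle\). This restriction is p.e.c. by Corollary~\ref{cor:Descent}, and it is still \(\aleph_0\)-saturated, by the same argument as in the proof of Theorem~\ref{th:overgroupBSmn}. We then view it through the surjection \(\varphi_\nu\) of Lemma~\ref{lem:iteratedRelation}, which satisfies \(\ker\varphi_\nu\subseteq\ker\beta_\nu\cap\ker\ell_\nu\). By Fermat's little theorem, the cyclotomic exponent for \(\BS(m^\nu,n^\nu)\) is \(w_\nu\equiv1\). Because \(\zeta\) is fixed, \(\theta_q\) holds relative to \(\varphi_\nu\), and Theorem~\ref{th:BSQuotient} then gives a contradiction.

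There is essentially no obstacle here; the only point to check is that the hypotheses of Theorem~\ref{th:overgroupBSmn} are met verbatim by the trivial embedding. One might hope to shortcut via Corollary~\ref{cor:BSObstruction} applied directly to \(\BS(m,n)\) with \(\nu=1\). That would require \(\tau(\zeta)=\zeta^w\) with \(w=n/m\bmod q\), which fails for the trivial action unless \(w=1\). The passage to \(\tau^{q-1}\) is precisely what avoids this, and that is why I route the argument through Theorem~\ref{th:overgroupBSmn}.
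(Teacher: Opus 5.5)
Your proposal is correct and is exactly the paper's proof: you apply Theorem~\ref{th:overgroupBSmn} to $G=\BS(m,n)$ with the identity embedding, and your unpacked version is just that theorem's proof specialized to this case. One small aside on your closing remark: the obstruction to using Corollary~\ref{cor:BSObstruction} directly comes only from choosing the trivial base action, since giving $\Q(\zeta)$ the $\BS(m,n)$-action with $\sigma$ trivial and $\tau\colon\zeta\mapsto\zeta^w$ would also work when $G=\BS(m,n)$; this does not affect your argument.
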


\begin{proof}
  Apply Theorem~\ref{th:overgroupBSmn} with \(G=\BS(m,n)\).
\end{proof}

\begin{corollary}\label{cor:Z2Overgroup}
  If \(\Z^2\leq G\), then \(G\TCF\) does not exist.
\end{corollary}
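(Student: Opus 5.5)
This follows directly from Theorem~\ref{th:overgroupBSmn} with \(m=n=1\). The only thing to check is that \(\Z^2\) is isomorphic to \(\BS(1,1)\), so that a copy of \(\Z^2\) in \(G\) is an embedded Baumslag--Solitar subgroup in the sense of Section~\ref{sec:overgroups}.

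The presentation \(\BS(1,1)=\langle\sigma,\tau\mid\tau\sigma\tau^{-1}=\sigma\rangle\) says exactly that the generators commute. Hence it presents the free abelian group on two generators. Explicitly:
\begin{itemize}
  \item the map \(\sigma\mapsto(1,0)\), \(\tau\mapsto(0,1)\) respects the relation, so it defines a homomorphism \(\BS(1,1)\to\Z^2\);
  \item conversely, \((a,b)\mapsto\sigma^a\tau^b\) is a homomorphism \(\Z^2\to\BS(1,1)\), since \(\sigma\) and \(\tau\) commute there;
  \item the two maps are mutually inverse on generators, so they are inverse isomorphisms.
\end{itemize}

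Now suppose \(\Z^2\leq G\). Composing with this isomorphism gives an embedding \(\BS(1,1)\hookrightarrow G\). Theorem~\ref{th:overgroupBSmn} with \(m=n=1\), which are non-zero, then shows that \(G\TCF\) does not exist.

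There is no real obstacle here: all the work is in Theorem~\ref{th:overgroupBSmn}. One might note that the proof of that theorem simplifies in this case. Taking \(q=2\) in characteristic different from \(2\) gives \(\nu=1\), as remarked after Definition~\ref{def:cyclotomic-condition}. However, the general theorem already covers every prime \(q\nmid mn\), so no case distinction is needed.
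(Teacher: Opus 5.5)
Your proposal is correct and follows the paper's own one-line proof: identify \(\Z^2\cong\BS(1,1)\) and apply Theorem~\ref{th:overgroupBSmn} with \(m=n=1\). The explicit isomorphism check and your side remark that \(q=2\) gives \(\nu=1\) are accurate, though neither is needed.
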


\begin{proof}
  Use \(\Z^2\cong\BS(1,1)\) in Theorem~\ref{th:overgroupBSmn}.
\end{proof}

\begin{example}[Further examples]\label{ex:furtherExamples}
  \begin{enumerate}[label=\textup{(\roman*)}]
    \item Since \(\Z^2\leq\Q^2\), \(\Q^2\TCF\) does not exist.
    \item Thompson's group \(F\) has the infinite presentation
          \[
            F=\langle x_0,x_1,x_2,\ldots\mid x_i^{-1}x_jx_i=x_{j+1}\ (i<j)\rangle.
          \]
          The relations for \((i,j)=(0,2)\) and \((1,2)\) show that \(x_0x_1^{-1}\) commutes with \(x_2\), and their images in \(F_{\mathrm{ab}}\cong\Z^2\) are linearly independent.
          Hence
          \[
            \langle x_0x_1^{-1},x_2\rangle\cong\Z^2;
          \]
          see \cite[Theorems~3.1 and 4.1]{CannonFloydParry1996}.
          Therefore \(F\TCF\), \(T\TCF\), and \(V\TCF\) do not exist.
    \item Higman's group
          \[
            H=\langle a,b,c,d\mid a^{-1}ba=b^2,\ b^{-1}cb=c^2,\ c^{-1}dc=d^2,\ d^{-1}ad=a^2\rangle
          \]
          contains the subgroup \(\langle a,b\rangle\cong\BS(1,2)\), with the standard generators corresponding to \(\sigma=b\) and \(\tau=a^{-1}\); cyclically, the other adjacent pairs give three further copies.
          Hence \(H\TCF\) does not exist; see \cite[\S2.1]{RivasTriestino2019}.
  \end{enumerate}
\end{example}

\begin{example}[The group \(\BS(2,3)\)]
  This example explains why Theorem~\ref{th:BSQuotient} is needed in the proof of Theorem~\ref{th:overgroupBSmn}.
  The iterated relation gives only a surjection from a Baumslag--Solitar group onto \(H_\nu\), and this map need not be injective; thus \(H_\nu\) cannot in general be treated as a Baumslag--Solitar group itself.

  For \(\BS(2,3)=\langle\sigma,\tau\rangle\), take \(q=5\) and \(\nu=4\).
  Then
  \[
    \left(\frac32\right)^4=\frac{81}{16}\equiv1\pmod5,
  \]
  and Lemma~\ref{lem:iteratedRelation} gives
  \[
    \varphi_4:\BS(16,81)\twoheadrightarrow H_4:=\langle\sigma,\tau^4\rangle\leq\BS(2,3).
  \]
  We show that this surjection is not injective.
  Let \(\sigma_4,\tau_4\) denote the standard generators of \(\BS(16,81)\), and set
  \[
    \delta:=\tau_4\sigma_4^{-8}\tau_4^{-1}\sigma_4^{-3}\tau_4\sigma_4^8\tau_4^{-1}\sigma_4^3.
  \]
  Viewed as an HNN extension with base group \(\langle\sigma_4\rangle\), the word \(\delta\) is reduced: the exponents \(\pm8\) are not divisible by \(16\), and \(-3\) is not divisible by \(81\).
  Hence \(\delta\neq1\) by Britton's lemma \cite[Chapter~IV]{LyndonSchupp1977}.

  On the other hand, the displayed surjection sends \(\sigma_4\) to \(\sigma\) and \(\tau_4\) to \(\tau^4\).
  In \(\BS(2,3)\), iterating the defining relation three times gives
  \[
    \tau^3\sigma^8\tau^{-3}=\sigma^{27},
  \]
  so
  \[
    \tau^4\sigma^8\tau^{-4}=\tau\sigma^{27}\tau^{-1}.
  \]
  Also \(\sigma^3=\tau\sigma^2\tau^{-1}\).
  Thus \(\tau^4\sigma^8\tau^{-4}\) and \(\sigma^3\) are conjugates by \(\tau\) of powers of \(\sigma\), and hence commute.
  Therefore the image of \(\delta\) is
  \[
    (\tau^4\sigma^8\tau^{-4})^{-1}\sigma^{-3}(\tau^4\sigma^8\tau^{-4})\sigma^3=1.
  \]
  Thus the kernel is non-trivial.
  Nevertheless, Lemma~\ref{lem:iteratedRelation} gives
  \[
    \ker\varphi_4\subseteq\ker\beta_4\cap\ker\ell_4,
  \]
  so Theorem~\ref{th:BSQuotient} applies despite the non-trivial kernel.
\end{example}

\end{document}